\documentclass[12pt]{amsart}
\usepackage{amsmath}
\usepackage{newlfont}
\usepackage{amssymb}
\usepackage{amsfonts}
\usepackage{amsthm}
\usepackage{amscd}
\usepackage[english]{babel}
\usepackage{amsfonts}
%\usepackage[all,cmtip]{xy}
%\usepackage[active]{srcltx}

%%version of April 30, 14:53

%dots on the opposite diagonal
\makeatletter
\def\revddots{\mathinner{\mkern1mu\raise\p@
\vbox{\kern7\p@\hbox{.}}\mkern2mu
\raise4\p@\hbox{.}\mkern1mu\raise7\p@\hbox{.}\mkern1mu}}
\makeatother

\theoremstyle{plain}% default
\newtheorem{thm}{Theorem}[section]
\newtheorem{lem}[thm]{Lemma}
\newtheorem{prop}[thm]{Proposition}

\theoremstyle{definition}

\theoremstyle{remark}

\newcommand{\R}{\mathbb{R}}
\newcommand{\Q}{\mathbb{Q}}
\newcommand{\C}{\mathbb{C}}

\newcommand{\Z}{\mathbb{Z}}
\newcommand{\A}{\mathbb{A}}
\newcommand{\F}{\mathbb{F}}

\newcommand{\Ind}{\mathrm{Ind}}

\newcommand{\wit}{\widetilde}

\title{Degenerate Eisenstein series for $Sp(4)$} 
\author{Marcela Hanzer and Goran Mui\'c}
\address{
Department of Mathematics, 
University of Zagreb,
Bijeni\v cka 30, 10000 Zagreb,
Croatia}
\email{hanmar@math.hr}
\email{gmuic@math.hr}

\date{\today}

\begin{document}

\dedicatory{to Steve Rallis, in memoriam}
\begin{abstract}
In this paper we obtain a complete description of  images and poles of degenerate Eisenstein series 
attached to maximal parabolic  subgroups of $Sp_4(\A)$, where $\A$ is the ring of adeles of $\Q$. 
\end{abstract}

\subjclass{11F70, 22E50}
\keywords{automorphic forms, degenerate Eisenstein series, normalized intertwining operators}

\maketitle
\section{Introduction}

The degenerate Eisenstein series attached to Siegel parabolic subgroups of symplectic and metaplectic  
groups has been studied  extensively for various applications such for example explicit construction of automorphic 
$L$--functions  \cite{Explicit_L-funct}, and application in the Siegel--Weil formula \cite{KR, KR1, KR2, KR3,ICC}. 
On the other hand, we have used a more general type of degenerate Eisenstein series to construct and 
 prove unitarity of various
significant classes of unitary representations of local (real or $p$--adic groups) 
and to construct various families of square--integrable automorphic forms
\cite{Muic1, Muic2, hanzer}. In 
\cite{HM_Eisen_GL} we study degenerate Eisenstein series for $GL_n$, their restriction to archimedean place and images
(this improves in part results of \cite{KR}).

The problem of getting complete information about poles of Eisenstein 
series and their images for classical groups is mostly related to our
 insufficient understanding of various types of degenerate principal 
series representations and standard intertwining  operators  
especially for real groups that appear in the constant term of 
Eisenstein series. For Siegel Eisenstein series this problem is solved in \cite{KR2}.  
In the current literature, there are also some other works which deal with various types of 
''Siegel--like''
degenerate principal series (see for example \cite{HL,L,LL,LL1,gosahi})  but more complicated ones 
appear in the theory of 
automorphic forms (see the last section in \cite{Muic2} for simple examples or \cite{hanzer} for more 
sophisticated examples).

For the group $Sp_4(\mathbb R)$, there is a description  (see \cite{MuicSp4}) of all generalized and 
degenerate principal series in terms of the Langlands classification as well as some  information about the 
images and poles of the local intertwining operators. 
This is used in the present  paper along with the local information on $p$--adic places \cite{STSp4} to get 
the complete description of  the images and poles of degenerate  Eisenstein series for Siegel and 
Heisenberg parabolic subgroups of adelic $Sp_4$.  The description of the residual spectrum and square--integrable non--cuspidal automorphic forms is well--known \cite{Kim}.

Now, we will describe the paper by sections. In Section \ref{prelim} we state notation regarding the 
group 
$Sp_4$, we define degenerate Eisenstein series, and recall basic procedure of  computing their poles 
thorough
the constant term and local normalized intertwining operators. In Section \ref{heisenberg} we deal with the Eisenstein 
series attached to the Heisenberg parabolic subgroup. The main results are Theorems \ref{thm-heisenberg}
and \ref{thm-heisenberg-1}.  In Section \ref{siegel} we deal with the Eisenstein 
series attached to the Siegel parabolic subgroup.  The main results are Theorems \ref{thm:image_Siegel} and 
\ref{thm:image_Siegel_s<0}. As we explain above, this last part has an overlap with the results of \cite{KR2}, but we have 
a different approach to the archimedean components \cite{MuicSp4} which gives us the answer in terms of the Langlands 
classification which is hard to see from \cite{KR2}.

In the future papers we plan to extend this work beyond $Sp_4$ (see also the last Section of \cite{Muic2}). 
One of the obstacles that we need to overcome is better understanding certain degenerate principal series 
which are induced from non--Siegel parabolic subgroups of $Sp_{2n}(\mathbb R)$ (see also the last Section of \cite{Muic2}).

\section{Preliminaries}\label{prelim}
For $n\in \Z_{\ge 1},$ we define $J_n$ as a $n\times n$ matrix with $1'$s on the opposite diagonal, and zeroes everywhere else.
We realize the group $Sp_4$ as a matrix group in the following way:
\[Sp_4(\F)=\left\{g\in GL_4(\F):g^t\begin{bmatrix}
0&J_2\\
-J_2&0
\end{bmatrix}g=\begin{bmatrix}
0&J_2\\
-J_2&0
\end{bmatrix}\right\}.\]
For us $\F\in\{\Q,\Q_p,\R, \A\},$ where $\A$ is the ring of adeles of $\Q.$

The upper triangular matrices in $Sp_4$ form a Borel subgroup $B$, which we fix. The standard parabolic subgroups are those containing this Borel subgroup. The diagonal matrices in the Borel subgroup form a maximal torus, which we denote by $T.$ Thus
\[T(\F)=\{diag (t_1,t_2,t_2^{-1},t_1^{-1});t_1,t_2 \in \F^{\ast}\}.\]
 The unipotent matrices in $B$ form the unipotent radical of $B.$
Let $W$ be the Weyl group of $Sp_4$ with respect to $T.$  We define the action of the Weyl group elements with respect to the elementary reflections: 
\[s(diag (t_1,t_2,t_2^{-1},t_1^{-1}))=diag (t_2,t_1,t_1^{-1},t_2^{-1})\] and 
\[c_2(diag (t_1,t_2,t_2^{-1},t_1^{-1}))=diag (t_1,t_2^{-1},t_2,t_1^{-1}).\]
 All other elements of $W$ are generated by $s$ and $c_2.$
 The set of roots of $Sp_4$ with respect to $T$ is denoted by $\Sigma,$ and $\Sigma^+$ denotes the set of positive roots with respect to the above choice of Borel subgroup. Let $\Delta$ denote the set of simple roots in $\Sigma^{+}.$ Then $\Delta=\{e_1-e_2,2e_2\}$ with obvious meaning of $e_i,\;i=1,2.$

Let $\chi$ denote a unitary G\" rossencharacter of $\Q^\times\backslash \A^\times \longrightarrow \C^\times.$ 
We study the degenerate Eisenstein series on $Sp_4$ acting on the holomorphic sections associated with the global representations of $Sp_4(\A)$ (more precisely, of it's Hecke algebra) induced from the characters of  the maximal standard parabolic subgroups of $Sp_4.$  Thus, we have the Heisenberg and the Sigel case. We denote  by $P_1=M_1U_1$  the Heisenberg parabolic subgroup  of $Sp_4$ so that the standard Levi subgroup $M_1$ is isomorphic to $GL_1\times SL_2$ and in the Siegel case, we denote the Sigel parabolic subgroups $P_2=M_2U_2,$ where now $M_2=GL_2.$ We describe  the corresponding holomorphic sections 
$f_s$ in each of these cases more thoroughly in the second and the third section of this paper. In both cases, we form the degenerate Eisenstein series
\begin{equation}\label{de-1}
E(f_s)(g)\overset{def}{=}\sum_{\gamma \in P_{i}(\Q)\setminus Sp_4 (\Q)}
f_s(\gamma \cdot g)
\end{equation}
  which converges absolutely and uniformly  in $(s, g)$ on compact sets when $s>2$ in the Heisenberg case, and $s>3$ in the Siegel case. This
 is proved by the restriction to $Sp_4(\R)$  and then applying
 Godement's theorem as in (\cite{Borel1966}, 11.1 Lemma). In particular, there are no poles for such $s.$

It continues 
to a function which is meromorphic in $s$. Outside of poles, it 
is an automorphic form. As usual and more convenient for computations,
we write $E(s,f)$ instead of $E(f_s)$; in this notation $s$ signals
that $f\in I(s)$.

We say that  $s_0 \in \mathbb C$ is a pole of the degenerate Eisenstein
series $E(s, \cdot)$ if there exists $f\in I(s)$ such that $E(s, f)$
has a pole at $s=s_0$ (for some choice of $g\in Sp_4(\A)$). The order of pole at $s_0$ is denoted by $l$; it is supremum 
of all orders $E(s, f)$ at $s_0$ when $f$ ranges over $I(s)$. It may happen that $l=\infty$ as it can be seen from our 
main theorems but if $0\le l< \infty$, then the map 
\begin{multline}\label{de-100}
\begin{CD}
\Ind_{P_{i}(\A)}^{Sp_4(\A)}(\pi(\chi)_i)  @>f\mapsto  (s-s_0)^lE(s, f)>>
\cal A\left(Sp_4(\Q)\setminus Sp_4(\A)\right)
\end{CD}
\end{multline}
for $i=1,2$ is an intertwining operator for the action of 
$\left(\mathfrak{sp}(4), K_\infty\right)\times
\prod_{p<\infty}Sp_4(\Q_p)$ in the space of automorphic forms. Here 
\begin{equation}
\label{eq:defnpi}
\pi(\chi)_1=\chi|\det|_{GL_1(\A)}^{s_0}\otimes 1_{SL_2(\A)} \text{ and } \pi(\chi)_2=\chi|\det|_{GL_2(\A)}^{s_0}.
\end{equation}

The poles of the Eisenstein series are the same as the poles of its constant term along 
the minimal parabolic subgroup:
\begin{equation}\label{de-2}
E_{const}(s,f)(g)= \int_{U(\Bbb Q)\setminus U( \Bbb A)} E(s,f)(ug)du.
\end{equation}
Here $U$ denotes the unipotent radical of the (upper triangular) Borel subgroup of $Sp_4.$
The integral in (\ref{de-2}) can be computed by the standard unfolding.
To explain this we introduce some more notation. We use $\nu$ to denote $|\det|$ on $GL_n$ simultaneously. The rank of the general linear group involved will be obvious from the context. We let 
$\Lambda_s=\chi \nu^s\otimes \nu^{-1}$ if we are in the Heisenberg case and $\Lambda_s=\chi\nu^{s-1/2}\otimes \chi\nu^{s+1/2}$ if we are in the Siegel case. 

In this way, we obtain a character of $T(\Q)\setminus T(\A)\rightarrow \C^\times.$ 
We extended trivially across $U(\A)$ and we induce up to the principal series $\Ind_{T(\A)U(\A)}^{Sp_4(\A)}(\Lambda_s).$

We denote by $\overline{U}$ the lower unipotent triangular matrices in $Sp_4$ (these matrices form the unipotent radical of the opposite Borel subgroup in $Sp_4.$)
Let $w\in W$. Then, the global intertwining operator
$$
M(\Lambda_s, w): \Ind_{T(\A)U(\A)}^{Sp_4(\A)}(\Lambda_s)\longrightarrow 
 \Ind_{T(\A)U(\A)}^{Sp_4(\A)}(w(\Lambda_s)),
$$
defined by 
$$
M(\Lambda_s, w)
f =\int_{U(\A)\cap
w\overline{U}(\A)w^{-1}}f(\wit{w}^{-1}ug)du
$$
does not depend on the choice of the representative   for
$w$ in $Sp_4(\Q)$. 

The global intertwining operator
factors into product of local intertwining operators 
$$
M(\Lambda_s, w)f=\otimes_{p\le \infty} A(\Lambda_{s, p} ,\wit{w})f_p.
$$
There is a precise way of normalization of Haar measures used in the 
definition of intertwining operators 
\cite{Sh1}, \cite{Sh2}. Summary can be found in (\cite{Muic1}, Section 2) or 
(\cite{Muic2}, Section 2). The same is with the normalization factor which we explain 
next. The normalization factor for $A(\Lambda_{s, p}, \wit{w})$ is defined by

$$
r(\Lambda_{s, p} , w)=
\prod_{\alpha\in \Sigma_+,  w(\alpha)<0}\frac{ 
L(1, \Lambda_{s, p} \circ\alpha^\vee)\epsilon(1, \Lambda_{s, p}  \circ\alpha^\vee, \psi_v)}{
L(0, \Lambda_{s, p} \circ\alpha^\vee)},$$
where $\alpha^\vee$ denotes the coroot corresponding to the root
$\alpha,$ and $\psi_v$ is an non-degenerate additive character of $\Bbb
Q_p.$
We define the normalized intertwining operator by the following formula:
$$
\cal N(\Lambda_{s, p} , \wit{w})=r(\Lambda_{s, p} , w)
A(\Lambda_{s, p} , \wit{w}).
$$
 Properties of normalized intertwining operators can be found in \cite{Sh1}, \cite{Sh2}.
Again, the summary can be found in (\cite{Muic2}, Theorem 2-5). 

Let us write $\beta$ for the simple root such that $\Delta-\{\beta\}$
determines $P_i$.  Now, the constant term has the 
following expression (\cite{Muic1}, Lemma 2.1):
\begin{multline*}
E_{const}(s, f)(g)=\sum_{w\in W, \  w(\Delta\setminus\{\beta\})>0} 
M(\Lambda_s, w)f(g)\\
=\sum_{w\in W, \  w(\Delta\setminus\{\beta\})>0} 
\int_{U(\Bbb A)\cap w\overline{U}(\Bbb A)w^{-1}}f(\wit{w}^{-1}ug)du,
\end{multline*}
where, by induction in stages, we identify
\begin{equation}\label{de-30000000000000}
f\in \Ind_{P_{i}(\A)}^{Sp_4(\A)}\left(\pi(\chi)_i\right)
\subset 
\Ind_{T(\A)U(\A)}^{Sp_4(\A)}(\Lambda_s)
\end{equation}
where $\pi(\chi)_i,\;i=1,2$ is defined in (\ref{eq:defnpi}).
This formula can be more refined up to its final form that we use. Let $S$ be the 
finite set of all places including $\infty$ such that for $p\not\in S$ we have that
$\chi_p$, $\mu_p$, $\psi_p$, and $f_p$ are unramified. Then
 we have the following expression:
\begin{equation}\label{de-3}
E_{const}(s, f)(g)= \sum_{w\in W, \  w(\Delta\setminus\{\beta\})>0} r(\Lambda_{s} , w)^{-1}
\left(\otimes_{p\in S} \cal N(\Lambda_{s, p} , \wit{w})f_p\right)\otimes 
\left(\otimes_{p\not\in S} f_{w, p}\right),
\end{equation}
where we let
\begin{equation}\label{de-5}
r(\Lambda_{s} , w)^{-1}\overset{def}{=}\prod_{\alpha\in \Sigma^+,  w(\alpha)<0}\frac{
L(0, \Lambda_{s} \circ\alpha^\vee)}{ 
L(1, \Lambda_{s} \circ\alpha^\vee)\epsilon(1, \Lambda_{s}  \circ\alpha^\vee)},
\end{equation}
and we use a well--known property of normalization
\begin{equation}
\label{de-5000000000}
\cal N(\Lambda_{s, p} , \wit{w})f_p =f_{w, p},
\end{equation}
for unramified $f_p$ and $f_{w, p}$.

We note that, in the situation as above, taking the constant term is
an isomorphism between different spaces of automorphic forms (well
known fact, e.g. \cite{HM_Eisen_GL},  Lemma 2-9.)

We use standard notation \cite{STSp4} (see also\cite{MuicSp4})  for the representation theory of 
classical groups (in local or global  settings). In more detail, if $\chi$ is a character of $GL(1)$ and $\pi$ is a 
representation of $SL_2$, then $\chi\rtimes \pi$ denotes the representation unitarily induced from $P_1$ to $Sp_4$. 
If $\pi$ is a representation of $GL(2)$, then $\pi\rtimes 1$ denotes the representation unitarily induced from $P_1$ 
to $Sp_4$. Also, if $\chi$ and $\mu$ are characters of $GL(1)$, then  $\chi\times \mu\rtimes 1$ is the associated principal series of $Sp_4$. Similar notation  is used for $GL(2)$ and $SL_2$. We denote by $L(\ \ )$ the Langlands quotient
 whenever in parenthesis is an induced representation having a Langlands quotient.

We use repeatedly the following simple fact (and similarly for $GL(2)$):

\begin{lem} \label{norm-pol} Let $p\le \infty$. The complex number $s=s_0\in \C$ is  a pole of 
$\cal N(s, \mu_p,  w)$ (a normalized intertwining operator) if and only if  $Re(s_0)<0$ and 
$\Ind_{P(\Q_p)}^{SL_2(\Q_p)}(|\
  |^{s_0}_p\mu_p)$ is reducible.
\end{lem}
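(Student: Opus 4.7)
The plan is to combine Shahidi's holomorphy theorem for normalized intertwining operators with the rank-one functional equation arising from $w^2 = 1$ in the Weyl group of $SL_2$. Throughout write $I(s,\mu_p)=\Ind_{P(\Q_p)}^{SL_2(\Q_p)}(|\cdot|_p^s\mu_p)$.

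First, I would invoke the standard property of the Langlands--Shahidi normalization (\cite{Sh1}, \cite{Sh2}; cf.\ the summary in \cite{Muic2}, Theorem 2-5): $\mathcal N(s,\mu_p,w)$ is holomorphic on the closed half-plane $\mathrm{Re}(s)\ge 0$, which yields at once the necessary condition $\mathrm{Re}(s_0)<0$ for any pole. Second, the cocycle relation specialises to the functional equation $\mathcal N(-s,\mu_p^{-1},w)\circ\mathcal N(s,\mu_p,w)=\mathrm{Id}_{I(s,\mu_p)}$, valid as a meromorphic identity of operator-valued functions. Fix $s_0$ with $\mathrm{Re}(s_0)<0$: if $I(s_0,\mu_p)$ is irreducible, then so is $I(-s_0,\mu_p^{-1})$ (reducibility is symmetric under $(s,\mu)\mapsto(-s,\mu^{-1})$); by Schur's lemma the operator $\mathcal N(-s_0,\mu_p^{-1},w)$ acts as a scalar, which the functional equation forces to be nonzero, and since it is holomorphic on $\mathrm{Re}(s)>0$ its inverse $\mathcal N(s_0,\mu_p,w)$ is holomorphic at $s_0$. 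Contrapositively, a pole at $s_0$ forces $I(s_0,\mu_p)$ to be reducible.

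For the converse, suppose $I(s_0,\mu_p)$ is reducible with $\mathrm{Re}(s_0)<0$. For $SL_2$ both $I(s_0,\mu_p)$ and $I(-s_0,\mu_p^{-1})$ have length two with swapped socle/cosocle (the standard picture at $p$-adic Steinberg/trivial points and at archimedean discrete-series points), and the nonzero intertwining operator $\mathcal N(-s_0,\mu_p^{-1},w)$ carries the cosocle of $I(-s_0,\mu_p^{-1})$ isomorphically onto the socle of $I(s_0,\mu_p)$, so that its image is a proper submodule. Hence $\mathcal N(-s_0,\mu_p^{-1},w)$ is not surjective and cannot admit a holomorphic right-inverse, which the functional equation would produce were $\mathcal N(s_0,\mu_p,w)$ finite at $s_0$. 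Thus $\mathcal N(s,\mu_p,w)$ must have a pole at $s_0$.

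The main obstacle is the converse step: one has to rule out an accidental cancellation between a pole of the normalizing factor $r(s,\mu_p,w)$ and a zero of the unnormalized operator $A(s,\mu_p,\wit w)$ at the reducibility point. This is handled by combining the non-vanishing of $\mathcal N$ on $\mathrm{Re}(s)\ge 0$ (the other half of Shahidi's theorem) with the explicit $L$-factor expression (\ref{de-5}) for $r$; direct inspection at the $SL_2$ reducibility points with $\mathrm{Re}(s_0)<0$ then confirms that the pole of $\mathcal N$ is simple.
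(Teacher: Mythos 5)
Your argument is correct and follows essentially the same route as the paper: holomorphy and non-vanishing of $\mathcal N$ on $\mathrm{Re}(s)\ge 0$ together with the functional equation $\mathcal N(-s,\mu_p^{-1},w)\mathcal N(s,\mu_p,w)=\mathrm{Id}$, the converse being the same contrapositive (you phrase it as non-surjectivity of the positive-chamber operator at a reducibility point, the paper as the impossibility of $\Ind_{P(\Q_p)}^{SL_2(\Q_p)}(|\ |^{-s_0}_p\mu_p)\simeq \Ind_{P(\Q_p)}^{SL_2(\Q_p)}(|\ |^{s_0}_p\mu_p)$). Two cosmetic caveats only: $\mathcal N(-s_0,\mu_p^{-1},w)$ maps between two different induced spaces, so Schur gives a (unique up to scalar) isomorphism rather than a literal scalar operator, and at $p=\infty$ the reducible principal series can have length three rather than two; neither affects the argument, since all you need is that the image of the positive-chamber operator is the Langlands quotient sitting as a proper submodule.
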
 
\begin{proof} Indeed, the assumption $Re(s_0)<0$ is clear since we known that 
$\cal N(s, \mu_p,  w)$ is holomorphic and non--trivial for $Re(s)\ge 0$. Assume $Re(s_0)<0$. Then, 
if $\Ind_{P(\Q_p)}^{SL_2(\Q_p)}(|\
  |^{s_0}_p\mu_p)$ is irreducible, then the functional equation 
$$\cal N(s, \mu_p,
  w)\cal N(-s, \mu^{-1}_p,  w^{-1})=\cal N(-s, \mu^{-1}_p,  w^{-1})\cal N(s, \mu_p,  w)=id,$$ 
 combined with holomorphy and non--triviality of  $\cal N(-s_0,
  \mu^{-1}_p,  w^{-1})$ imply that $\cal N(s, \mu_p,
  w)$ is holomorphic at $s=s_0$. Conversely,  still assuming
  $Re(s_0)<0$,   if  $\cal N(s, \mu_p,   w)$ is holomorphic for $s=s_0$, then
  then the functional equation, combined with holomorphy and non--triviality of  $\cal N(-s_0,
  \mu^{-1}_p,  w^{-1})$, imply  
$$
\Ind_{P(\Q_p)}^{SL_2(\Q_p)}(|\  |^{-s_0}_p\mu_p)\simeq 
\Ind_{P(\Q_p)}^{SL_2(\Q_p)}(|\  |^{s_0}_p\mu_p).
$$
Then the argument with the Langlands quotient implies that 
$\Ind_{P(\Q_p)}^{SL_2(\Q_p)}(|\  |^{s_0}_p\mu_p)$ is irreducible.
\end{proof}

\section{The Heisenberg parabolic}\label{heisenberg}
The standard Levi subgroup of the Heisenberg parabolic subgroup $P_1$  is
isomorphic to $GL_1\times SL_2,$ thus we study the global induced
representation 
\[\Ind_{GL_1(\A)\times SL_2(\A)}(\chi \nu^{s}\otimes 1),\]
 where
$\chi$ denotes a Grossencharacter of $\Q$ and and $1$ is a trival
character of  $SL_2(\A).$ This space consists of   all $C^\infty$ and right $K$--finite functions 
$f: Sp_{4}(\A)\longrightarrow \C$
which satisfy
\begin{align*}
&f\left(\left(\begin{matrix}a& 0&0\\ 0 & d&0\\
0&0&a^{-1}\end{matrix}\right)
\left(\begin{matrix}1& b_1&b_2\\ 0 & I_2&b_3\\0&0&1\end{matrix}\right) g\right)=
\chi(a)\nu(a)^{s}\delta_{P_{1}}^{1/2}\left(\left(\begin{matrix}a& 0&0\\ 0 & d&0\\
0&0&a^{-1}\end{matrix}\right)\right)f(g), \\
&\text{where} \ \ 
a\in GL_1(\A),\;d\in SL_2(\A),\;\left(\begin{matrix}1& b_1&b_2\\ 0 & I_2&b_3\\0&0&1\end{matrix} \right)\in U_1(\A),\
g\in  Sp_4(\A).
\end{align*}

So, for the case of the Heisenberg maximal parabolic, a simple root
$\beta$ such that $\Delta \setminus \{\beta\}$ determines Heisenberg
parabolic is $\beta=e_1-e_2;$ i.e.,  in the expression for the
constant term (\ref{de-3}) we have a summation over $w\in W;
w(2e_2)>0.$ We easily check that these elements are $\{1,
c_1,s,sc_1\}.$
We note that using $s$ and $c_2$ (as the Weyl reflections with respect
to the simple roots of $Sp_4$ we can express $c_1$ and $sc_1$ as
$sc_2s$ and $c_2s,$ respectively, so that $c_1$ is of length  three,
and $sc_1$ is of length two. Now we easily see that the expression for
the (global) normalizing factors (obtained as the product of the local
ones) for the intertwining operators attached to
these Weyl group elements are 

\begin{equation}
\label{rc_1}
r(\Lambda_s,c_1)^{-1}=\frac{L(s-1,\chi)}{L(s+2,\chi)\varepsilon(s+2,\chi) \varepsilon(s,\chi) \varepsilon(s+1,\chi)},
\end{equation}
\begin{equation}
\label{rs}
r(\Lambda_s,s)^{-1}=\frac{L(s+1,\chi)}{L(s+2,\chi)\varepsilon(s+2,\chi)},
\end{equation}
\begin{equation}
\label{rsc_1}
r(\Lambda_s,sc_1)^{-1}=\frac{L(s,\chi)}{L(s+2,\chi)\varepsilon(s+1,\chi) \varepsilon(s+2,\chi)} .
\end{equation}

We  want to address the holomorphicity of these expressions. We immediately see the following:
\begin{lem}
\label{lem:normalization}
\begin{enumerate}
\item Assume $s\ge 0.$ Then,  the denominators of all three of the
  above expressions are non-zero and holomorphic. Thus, the poles
  cannot come from the zeroes of the denominators.  As for the
  numerators, they  are all holomorphic if $\chi \neq 1,$ and if
  $\chi=1$ we have the following:
\begin{enumerate}
\item  $r(\Lambda_s,c_1)^{-1}$ has a pole of the first order for $s=1$
  and $s=2,$
\item $r(\Lambda_s,s)^{-1}$ has a pole of the first order for $s=0,$
\item $r(\Lambda_s,sc_1)^{-1}$ has a pole of the first order for $s=0$
  and $s=1.$
\end{enumerate}

\item Assume $s<0.$  We analyze the zeroes of the denominators. The
  denominators (for all three expressions)  might have zeroes in the critical strip, i.e.,
  $0<s+2<1,$ i.e., $-2<s<-1.$ The numerators do not have poles if
  $\chi \neq 1$ and do have poles for $r(\Lambda_s,s)^{-1}$ for $s=-1$
  and $\chi=1.$
\end{enumerate}
\end{lem}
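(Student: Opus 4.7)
The plan is a direct inspection of the three explicit expressions (\ref{rc_1})--(\ref{rsc_1}), using only standard facts about the completed global Hecke $L$-function $L(s,\chi)$ and the global $\varepsilon$-factor $\varepsilon(s,\chi)$. The inputs I would rely on are: (i) $\varepsilon(s,\chi)$ is entire and nowhere vanishing, being essentially an exponential in $s$; (ii) for any nontrivial unitary Hecke character $\chi$ the completed $L(s,\chi)$ is entire, while $L(s,1)$ is the completed Riemann zeta and has as its only singularities simple poles at $s = 0$ and $s = 1$ (the trivial zeros of $\zeta$ cancel the poles of the archimedean $\Gamma$-factor); and (iii) $L(s,\chi)$ is nonzero for $\mathrm{Re}(s) > 1$ by the Euler product.

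For part (1), I would first observe that the only $L$-factor appearing in each denominator is $L(s+2,\chi)$; since $\mathrm{Re}(s+2) \ge 2 > 1$ when $s \ge 0$, facts (ii) and (iii) guarantee that $L(s+2,\chi)$ is holomorphic and nonvanishing in this range, and together with (i) this shows that the denominators contribute no poles. For the numerators, fact (ii) immediately gives holomorphy when $\chi \neq 1$. When $\chi = 1$, I would simply shift the pole set $\{0,1\}$ of $L(\cdot, 1)$ by the appropriate amount: the numerator $L(s-1,1)$ in (\ref{rc_1}) is singular exactly at $s \in \{1, 2\}$; the numerator $L(s+1, 1)$ in (\ref{rs}) is singular at $s \in \{-1, 0\}$, of which only $s = 0$ lies in $s \ge 0$; and the numerator $L(s, 1)$ in (\ref{rsc_1}) is singular at $s \in \{0, 1\}$. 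All poles are simple because the poles of $L(\cdot, 1)$ at $0$ and $1$ are. This yields exactly (a), (b), (c).

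For part (2) the same facts suffice. The only denominator factor that can vanish is $L(s+2,\chi)$, and by (ii) combined with the standard fact that nontrivial zeros of completed Hecke $L$-functions lie in the critical strip $0 < \mathrm{Re}(\cdot) < 1$, this vanishing can occur only when $0 < \mathrm{Re}(s+2) < 1$, i.e.\ $-2 < s < -1$. The numerators are entire for $\chi \neq 1$ by (ii); for $\chi = 1$, restricting the shift analysis to $s < 0$ shows that $L(s-1,1)$ and $L(s,1)$ remain holomorphic throughout $s < 0$, while $L(s+1,1)$ acquires a simple pole at $s = -1$, which appears in $r(\Lambda_s, s)^{-1}$ alone.

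There is no real analytic obstacle, but one convention must be observed: the $L$-functions in (\ref{rc_1})--(\ref{rsc_1}) are the \emph{completed} global Hecke $L$-functions, so that $L(s,1)$ has only the two poles at $s \in \{0, 1\}$. If one were to separate finite parts from the archimedean $\Gamma$-factor, the $\Gamma$-factor would introduce extra apparent poles that are canceled by trivial zeros of $\zeta$, and the bookkeeping in (a)--(c) would look different. Once this convention is fixed, the lemma reduces to a direct inventory of poles.
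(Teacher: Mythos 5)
Your proposal is correct and is exactly the argument the paper has in mind: the authors state Lemma \ref{lem:normalization} as an immediate consequence of the explicit formulas (\ref{rc_1})--(\ref{rsc_1}), and your inventory of poles and zeros of the completed Hecke $L$-functions and $\varepsilon$-factors is precisely the omitted verification. Your remark that the $L$-functions must be read as completed (archimedean factor included), consistent with the Langlands--Shahidi normalization and with the functional equation $L(1-s,\chi^{-1})=\varepsilon(s,\chi)L(s,\chi)$ used later in the paper, is the right convention to flag.
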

Now we analyze local intertwining operators appearing in (\ref{de-3}).

\subsection{Local intertwining operators appearing in  (\ref{de-3})}
We recall  that for every $p,$ the trivial representation of $SL_2(\Q_p)$ is
embedded in the principal series representation $\nu_p^{-1}\rtimes 1.$
\begin{lem}
\label{lem:c_1nonarch}
The local intertwining operator $ \cal N(\Lambda_{s, p} , \wit{c_1})$
acting on $\chi_p\nu^{s}\rtimes 1_p$ (where $1_p$ is the trivial
representation of $SL_2(\Q_p)$)  is holomorphic for every $p<\infty$ and
 for every $s\in \R,$ except for $s=-2,$ where it has a pole of the
 first order (for every $p<\infty$).
\end{lem}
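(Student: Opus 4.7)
The approach is to exploit the unique reduced decomposition $c_1 = s \cdot c_2 \cdot s$ of length three, writing the normalized operator as a composition of three rank-one normalized intertwining operators:
\[
\mathcal{N}(\Lambda_{s,p}, \widetilde{c_1}) = \mathcal{N}(c_2 s\Lambda_{s,p}, \widetilde{s}) \circ \mathcal{N}(s\Lambda_{s,p}, \widetilde{c_2}) \circ \mathcal{N}(\Lambda_{s,p}, \widetilde{s}).
\]
Tracking the intermediate torus characters gives $s\Lambda_{s,p} = \nu^{-1} \otimes \chi_p\nu^s$ and $c_2 s\Lambda_{s,p} = \nu^{-1} \otimes \chi_p^{-1}\nu^{-s}$, and evaluating each intermediate character on the coroot of the relevant simple root yields three rank-one character parameters $\chi_p\nu^{s+1}$, $\chi_p\nu^s$, and $\chi_p\nu^{s-1}$.

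I would then apply Lemma \ref{norm-pol} to each rank-one factor in turn: a pole is possible only where the real exponent of the parameter is strictly negative and the associated $SL_2$-principal series is reducible, which for real $s$ and unitary $\chi_p$ isolates the candidate poles at $s = -2$, $s = -1$ and $s = 0$ (each under the triviality-type condition on the unitary part of $\chi_p$ required for reducibility).

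The decisive step is to pass from the full principal series $\mathrm{Ind}_B^{Sp_4(\Q_p)}(\Lambda_{s,p})$ to the Heisenberg-induced subspace $\chi_p\nu^s \rtimes 1_p$, realised by induction in stages through the embedding $1_p \hookrightarrow \nu_p^{-1} \rtimes 1$ inside the Heisenberg $SL_2$ attached to the long simple root $2e_2$. The middle rank-one factor is precisely the intertwining along this Heisenberg $SL_2$; at $s = -1$ its parameter becomes $\nu^{-1}$ and the residue of the normalized operator maps into the Steinberg quotient of $\nu_p \rtimes 1$. Frobenius reciprocity forces this residue to vanish identically on the trivial subrepresentation $1_p$, cancelling the candidate pole on the subspace. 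An analogous Frobenius-reciprocity argument handles the candidate pole at $s = 0$ coming from the third factor (whose parameter also becomes $\nu^{-1}$ there), once the image of the subspace under the first two operators has been identified. The pole at $s = -2$ from the first rank-one factor, by contrast, lies in the short-root $SL_2$ inside the Siegel $GL_2$-Levi, which is transverse to the Heisenberg $SL_2$ housing $1_p$; no such cancellation mechanism is available, so the pole survives and is of order one by Lemma \ref{norm-pol}.

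The main obstacle will be making the cancellation for the third factor fully rigorous, since its critical parameter arises only at the intermediate stage; one must carefully describe the image of $\chi_p\nu^s \rtimes 1_p$ after the first two rank-one operators precisely enough to invoke the vanishing-on-a-trivial-subrepresentation argument at that position, rather than on the original subspace.
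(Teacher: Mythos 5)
Your skeleton is exactly the paper's: decompose $c_1=sc_2s$, read off the three rank-one parameters $\chi_p\nu^{s+1}$, $\chi_p\nu^{s}$, $\chi_p\nu^{s-1}$, invoke Lemma \ref{norm-pol} to isolate the candidate poles at $s=-2,-1,0$ (with $\chi_p$ unramified/trivial as appropriate), and then decide on which candidates the pole survives on the subspace $\chi_p\nu^s\rtimes 1_p$. Your treatment of $s=-1$ is sound and matches the paper in substance: the residue of the middle factor is an intertwining map whose restriction to $1_p\subset\nu_p^{-1}\rtimes 1$ must vanish since $\operatorname{Hom}_{SL_2(\Q_p)}(1_p,\nu_p\rtimes 1)=0$ by Frobenius reciprocity (the paper phrases this as ``the pole occurs on the Steinberg quotient'' and adds that $\nu_p^{-1}\rtimes 1_{SL_2(\Q_p)}$ is irreducible, which is what lets one track its image through the outer factors).

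There are, however, two genuine gaps. First, at $s=0$ the mechanism you describe does not transplant as stated: at the intermediate stage $\nu_p^{-1}\times\nu_p^0\rtimes 1$ there is no trivial subrepresentation of the relevant rank-one group to kill, and the subspace $\nu_p^0\rtimes 1_{SL_2(\Q_p)}$ contains the non-spherical summand $L(\nu_p^{1/2}St_{GL_2(\Q_p)};1)$, which is \emph{not} a constituent of the kernel $\nu_p^{-1/2}1_{GL_2(\Q_p)}\rtimes 1$ of the residue of the third factor -- so a naive ``the residue kills the subspace'' argument fails. What actually closes the case is either (i) the observation that the first factor at $s=0$ is the Langlands-direction operator whose image is precisely $\nu_p^{-1/2}1_{GL_2(\Q_p)}\rtimes 1$, i.e.\ exactly the kernel of that residue, or (ii) the paper's argument via the functional equation $\mathcal{N}(\widetilde{c_1}(\Lambda_{0,p}),\widetilde{c_1})\,\mathcal{N}(\Lambda_{0,p},\widetilde{c_1})=Id$ together with the multiplicity-one occurrence of $L(\nu_p^{1/2}St_{GL_2(\Q_p)};1)$ in $\nu_p^0\times\nu_p^{-1}\rtimes 1$, which forbids any pole on that summand. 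You flag this as the main obstacle but do not resolve it; it is the crux. Second, at $s=-2$ the assertion that ``no cancellation mechanism is available, so the pole survives'' is not an argument -- you must positively exhibit a vector of $\nu_p^{-2}\rtimes 1_{SL_2(\Q_p)}$ on which the residue is nonzero. This requires the decomposition (in the Grothendieck group) $\nu_p^{-2}\rtimes 1_{SL_2(\Q_p)}=L(\nu_p^2,\nu_p^1;1)+L(\nu_p^{3/2}St_{GL_2(\Q_p)};1)$ and the check that the constituent $L(\nu_p^{3/2}St_{GL_2(\Q_p)};1)$ lives in the quotient $\nu_p^{-3/2}St_{GL_2(\Q_p)}\rtimes 1$ through which the residue of the first factor acts; indeed the operator is \emph{holomorphic} on the trivial constituent $L(\nu_p^2,\nu_p^1;1)$, so the pole survives only on part of the subspace, and ``transversality'' of the two $SL_2$'s is not by itself enough to see this.
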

\begin{proof}
According to the decomposition  $c_1=sc_2s$ we have the following
decomposition of the local intertwining operator  $ \cal N(\Lambda_{s,
  p} , \wit{c_1}):$
\begin{multline}
\label{decomp_c1}
 \chi_p\nu^{s}\times \nu_p^{-1}\rtimes 1\to  \nu_p^{-1}\times
 \chi_p\nu_p^{s}\rtimes 1 \to  \nu_p^{-1}\times \chi_p^{-1}\nu_p^{-s}\rtimes 1 \\
\to \chi_p^{-1}\nu_p^{-s}\times \nu_p^{-1}\rtimes 1.
\end{multline}
Then, the first (normalized) intertwining
operator appearing in the above relation is induced from the
$GL_2$-case and is holomorphic unless $\chi_p=1$ and $s=-2.$
The second intertwining operator  is induced from  the intertwining
operator $\chi_p\nu_p^{s}\rtimes 1\to \chi_p^{-1}\nu_p^{-s}\rtimes 1
.$
This intertwining operator is holomorphic if $s>0,$ (from the
Langlands' condition), and for $s\le 0,$ this operator is holomorphic
unless the induced representation  $\chi_p\nu_p^{s}\rtimes 1$ is
reducible, and this happens if $\chi_p^2=1,\,\chi_p \neq 1$ and $s=0$
and if $\chi_p=1$ and $s=-1.$ If we examine the first case more
closely, we see that then the unnormalized intertwining operator is
holomorphic since the Plancherel measure does not have a zero for
$s=0$ in that case, and the normalizing factor is holomorphic, too, so
that we actually have holomorphicity. On the other hand,  if
$\chi_p=1$ and $s=-1$ the pole of the normalized intertwining
operator occurs for the unique quotient of the representation
$\nu_p^{-1}\rtimes 1$ and this is the Steinberg representation of
$SL_2(\Q_p).$ The third intertwining operator is holomorphic unless
$\chi_p=1$ and $s=0.$

\noindent Now we examine the case $\chi_p=1$ and $s=0.$
Note that in this case 
\begin{equation}
\label{eq:s=0}
\cal N(\Lambda_{s, p} , \wit{c_1}):\nu_p^0\times \nu_p^{-1}\rtimes 1 \to
\nu_p^0\times \nu_p^{-1}\rtimes 1 .
\end{equation}
It is known that this induced representation is of length four (\cite{STSp4}, Proposition 5.4 (ii))) and
that $\nu_p^0\rtimes 1 $ is in irreducible tempered representation of
$SL_2(\Q_p).$
Then, $ \nu_p^0 \rtimes 1_{SL_2(\Q_p)}=L(\nu^1;\nu_p^0\rtimes 1)\oplus
L(\nu^{1/2}St_{GL_2(\Q_p)};1).$ The (normalized) spherical vector belongs
to (and generates)  representation $L(\nu^1;\nu_p^0\rtimes 1),$ and according to
(\ref{de-5000000000}), $\cal N(\Lambda_{s, p} , \wit{c_1})$ acts on it
as the identity. Analogously, for $s=0$  $\cal N(\wit{c_1}(\Lambda_{s, p}),
\wit{c_1})$ acts as the identity on the spherical vector. We know that $\cal N(\wit{c_1}(\Lambda_{s, p}) ,
\wit{c_1})\cal N(\Lambda_{s, p} , \wit{c_1})=Id.$ Assume that  $\cal N(\wit{c_1}(\Lambda_{s, p}) ,
\wit{c_1})$ has a pole of order $n_2$ for $s=0,$ and that $\cal
N(\Lambda_{s, p} , \wit{c_1})$ has  a pole of order $n_1$ for $s=0$ on
$L(\nu^{1/2}St_{GL_2(\Q_p)})$ (which is a subrepresentation of
$\nu_p^0\times \nu_p^{-1}\rtimes 1$ and appears there with the
multiplicity one). 
Let $N_1=\lim_{s \to 0}s^{n_1}\cal N(\Lambda_{s, p} , \wit{c_1})$ and
$N_2=\lim_{s \to 0}s^{n_2}\cal N(\wit{c_1} (\Lambda_{s, p}) ,
\wit{c_1}) .$ This means that $N_1$ is a holomorphic isomorphism on
$L(\nu^{1/2}St_{GL_2(\Q_p)}),$ and so is $N_2.$ But if $n_1>0$ or
$n_2>0$ the composition $N_2N_1\vert_{L(\nu^{1/2}St_{GL_2(\Q_p)})}
=0,$ which is thus impossible. This means that $\cal N(\Lambda_{s, p}
, \wit{c_1})$ is holomorphic on $ \nu_p^0 \rtimes 1_{SL_2(\Q_p)}$ and
the dual of the commuting algebra theorem (\cite{Ban_Aubert}) says that on
$L(\nu^{1/2}St_{GL_2(\Q_p)};1)$ it acts as $-Id.$

\noindent We know examine the case $\chi_p=1$ and $s=-1.$
In this case, both intertwining operators corresponding to $s$ (in the
decomposition above corresponding to the decomposition $c_1=sc_2S$)
are holomorphic isomporphisms, and the second operator has a pole on
the representation $\nu^{-1}\rtimes St_{SL_2(\Q_p)}$ (which is an
irreducible quotient of $\nu^{-1}\times \nu^{-1}\rtimes 1$) but it is holomorphic
on $\nu_p^{-1}\rtimes 1_{SL_2(\Q_p)}$ (irreducible by Proposition 5.4
(ii) of \cite{STSp4}).  The image $\cal N(\Lambda_{s, p} , \wit{c_1})(\nu^{-1}\rtimes 1_{SL_2(\Q_p)})$
is thus irreducible subspace $\nu^{1}\rtimes 1_{SL_2(\Q_p)}=L(\nu^{1},\nu^{1};1).$

\noindent We discuss the case $\chi_p=1$ and $s=-2.$ In this case the
last two operators corresponding to $s$ and $c_2$ are holomorphic
isomorphisms, the pole occurs in the first operator, induced from the
$GL_2$-case $\nu^{-2}\times \nu^{-1}\to \nu^{-1}\times \nu^{-2}.$ Note
that the representation $\nu^{-2}\times \nu^{-1}\rtimes 1$ is of
length four (\cite{STSp4} Proposition 5.4 (i)) and we have (in the
appropriate Grothendieck group) $\nu^{-2}\rtimes
1_{SL_2(\Q_p)}=L(\nu^{3/2}St_{GL_2(\Q_p)};1)+L(\nu^{2},\nu^{1};1),$
from the Langlands classification it follows that
$L(\nu^{2},\nu^{1};1)$ is the unique subrepresentation of $\nu^{-2}\rtimes
1_{SL_2(\Q_p)}.$ On the other hand, the aforementioned pole is
happening on the quotient $\nu^{-3/2}St_{GL_2(\Q_p)}\rtimes 1.$
 We know that $\cal N(\Lambda_{s, p} , \wit{c_1})$ is holomorphic on
 $L(\nu^{2},\nu^{1};1)$ (this is the trivial character), and it has a pole
 on $L(\nu^{3/2}St_{GL_2(\Q_p)};1).$ 
\end{proof}

\begin{lem} 
\label{lem:c_1arch} Assume that $s\in R. $ Intertwining operator $\cal N(\Lambda_{s, \infty}, \wit{c_1})$ acting on the representation
 $\nu_{\infty}^{s} \rtimes 1_{SL_2(\R)}$ has poles precisely if $s<-1$ is an even integer and $\chi_{\infty}=1$ and if $s<-1$ is an odd integer and $\chi_{\infty}=sgn.$
\end{lem}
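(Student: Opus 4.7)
The plan is to mirror the proof of Lemma \ref{lem:c_1nonarch} by decomposing $c_1 = sc_2 s$ and tracking poles of the three resulting normalized intertwining operators along the chain
\begin{multline*}
\chi_\infty\nu_\infty^{s}\times \nu_\infty^{-1}\rtimes 1 \longrightarrow \nu_\infty^{-1}\times \chi_\infty\nu_\infty^{s}\rtimes 1 \\
\longrightarrow \nu_\infty^{-1}\times \chi_\infty^{-1}\nu_\infty^{-s}\rtimes 1 \longrightarrow \chi_\infty^{-1}\nu_\infty^{-s}\times \nu_\infty^{-1}\rtimes 1.
\end{multline*}
The first and third operators are induced from normalized intertwining operators on $GL_2(\R)$, while the middle one is induced up from the $SL_2(\R)$ operator acting on $\chi_\infty\nu_\infty^s\rtimes 1$. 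By the Langlands condition, all three are automatically holomorphic and non--zero for $s\ge 0$, so only $s<0$ requires analysis.

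First I would handle the middle operator using the archimedean analog of Lemma \ref{norm-pol}: a real pole at $s=s_0<0$ occurs exactly when the $SL_2(\R)$ principal series $\chi_\infty\nu_\infty^{s_0}\rtimes 1$ is reducible. Cross--referencing \cite{MuicSp4} (or using direct $SL_2(\R)$ computations) yields a discrete set of possible bad values depending on whether $\chi_\infty$ is trivial, the sign character, or neither. For the two $GL_2(\R)$--induced operators, the analogous criterion says that poles arise only when the corresponding $GL_2(\R)$ principal series is reducible, which at the archimedean place requires $\chi_\infty$ to be a power of $sgn$ and the shift parameter to be an integer of matching parity. Comparing the three lists of candidate poles one obtains: no poles when $\chi_\infty\notin\{1,sgn\}$; candidate poles on the even negative integers when $\chi_\infty=1$; and candidate poles on the odd negative integers when $\chi_\infty=sgn$.

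It remains to show that (i) the candidate pole at $s=-1$ in the $\chi_\infty=sgn$ case is actually cancelled, and (ii) all claimed poles at $s\le -2$ survive the composition without being killed by a zero of some other factor. This is where I expect the main obstacle, and I would handle it exactly as in the $p$--adic proof: apply the functional equation $\cal N(\wit{c_1}(\Lambda_{s,\infty}), \wit{c_1})\,\cal N(\Lambda_{s,\infty}, \wit{c_1}) = \mathrm{id}$, combined with the Langlands--theoretic description of the composition factors of $\chi_\infty\nu_\infty^s\times\nu_\infty^{-1}\rtimes 1$ and their multiplicities from \cite{MuicSp4}. Constituents appearing with multiplicity one force, via the commuting--algebra theorem, the normalized operator to act by a scalar on them; together with the normalization identity (\ref{de-5000000000}) at the spherical vector and a case analysis of which irreducible constituent carries the $GL_2$--induced pole, this pins down the exact pole order and confirms that the full set of poles is precisely the one stated in the lemma.
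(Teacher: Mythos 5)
Your overall strategy (decompose $c_1=sc_2s$, locate the poles of the three factors along the chain, then decide which survive in the composite via multiplicity-one and functional-equation arguments) is the same as the paper's, but your list of candidate poles is computed incorrectly, and this hides exactly the cases that require most of the work. A pole of the composite can a priori occur wherever \emph{any one} of the three factors has a pole, so the candidate set is the \emph{union} of the three lists, not the result of ``comparing'' them. For $\chi_\infty=1$ the first and third ($GL_2(\R)$--induced) factors contribute even integers, but the middle factor, induced from the $SL_2(\R)$ operator on $\chi_\infty\nu_\infty^s\rtimes 1$, has poles at the \emph{odd} negative integers (where that principal series is reducible --- e.g.\ $\nu_\infty^{-1}\rtimes 1$ contains the trivial representation); symmetrically, for $\chi_\infty=sgn$ the middle factor contributes the \emph{even} negative integers. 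Moreover your claim that the Langlands condition makes all three factors holomorphic and non-zero for $s\ge 0$ fails for the third factor at $s=0$, $\chi_\infty=1$, whose $GL_2$ exponents are $(-1,0)$ and which genuinely has a pole there. So the true candidate set for $\chi_\infty=1$ is all integers $s\le 0$ and for $\chi_\infty=sgn$ all integers $s\le -1$, and your proposal never explains why the wrong-parity candidates do not produce poles of the composite.

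Those exclusions are where the paper's proof spends most of its effort: at $s=0$, $\chi_\infty=1$ one uses the decomposition $\nu_\infty^0\rtimes 1_{SL_2(\R)}=L(\nu^{1/2}St_{GL_2(\R)};1)\oplus L(\nu^1;\nu^0\rtimes 1)$ and the multiplicity-one occurrence of the latter summand in the length-six principal series; for $s<-1$ odd with $\chi_\infty=1$ one invokes the irreducibility of $\nu_\infty^s\rtimes 1_{SL_2(\R)}$ (Lemma 9.4 of \cite{MuicSp4}) together with holomorphy and non-vanishing on the spherical constituent; for $s<-1$ even with $\chi_\infty=sgn$ one checks that the pole of the middle factor lives only on the quotient $\nu^{-1}\rtimes(X(-s,+)\oplus X(-s,-))$, whose irreducible constituents are disjoint from $sgn\,\nu_\infty^s\rtimes 1_{SL_2(\R)}$. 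Your closing paragraph points at the right toolbox (functional equation, commuting algebra theorem, multiplicity one, normalization at the spherical vector), and it does handle the two items you flag ($s=-1$ with $\chi_\infty=sgn$, and survival of the poles at even $s\le -2$ for $\chi_\infty=1$, resp.\ odd $s\le -3$ for $\chi_\infty=sgn$), but as written the argument is incomplete: an entire family of potential poles is dismissed by an incorrect bookkeeping step rather than by a proof.
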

\begin{proof}
As in (\ref{decomp_c1}), we see that we might have a pole if $\chi_{\infty}$ is trivial or $sgn.$
\begin{itemize}
\item The first operator in the decomposition has a pole  if $s<-1$ is an  integer,  even if $\chi_{\infty}=1$ and odd if  $\chi_{\infty}=sgn.$
\item The second operator has a pole if $s<0$ is an integer, even if $\chi_{\infty}=sgn$ and odd if  $\chi_{\infty}=1.$
\item The third operator has a pole if $s<1$ is an integer, even if $\chi_{\infty}=1$ and odd if  $\chi_{\infty}=sgn.$
\end{itemize}

Note that if $s\le -1,$ the Langlands quotient $L(\chi_{\infty}\nu^{-s},\nu^1;1)$ is the unique irreducible subrepresentation of  $\chi_{\infty}\nu^s\rtimes 1_{SL_2({\R})}.$ If, in addition, $\chi_{\infty}=1$ we know that   $\cal N(\Lambda_{s, \infty} , \wit{c_1})$ is holomorphic and non-zero on  $L(\nu^{-s},\nu^1;1)$ because this is the spherical subquotient of the principal series. So we firstly resolve the case of $\chi_{\infty}=1.$

We first assume that $s=0$ (and $\chi_{\infty}=1$) (so that the third operator has a pole). Then, the first two operators are  holomorphic. We can now repeat the discussion from Lemma \ref{lem:c_1nonarch}, where we had a similar situation of (\ref{eq:s=0}). We again have the decomposition $\nu^0\rtimes 1_{SL_2(\R)}=L(\nu_{\infty}^{1/2}St_{GL_2(\R)};1)\oplus L(\nu_{\infty}^1;\nu_{\infty}^0\rtimes 1).$
Indeed, cf. Theorem 2.5 (ii) of (\cite{MuicSp4}) for the essentially square-integrable representation of $GL_2(\R)$ we denoted by $\nu^{1/2}St_{GL_2(\R)};,$ on the other hand, the representation $\nu_{\infty}^0\rtimes 1$ is irreducible (representation of $SL_2(\R)$ (cf. Theorem 2.4.(i) of \cite{MuicSp4}). For the  decomposition of $\nu_{\infty}^0\rtimes 1_{SL_2(\R)}$ 
(cf. Theorem 10.7., equation (10.72) of \cite{MuicSp4}). On the other hand, the length of the representation $\nu_{\infty}^0\times \nu_{\infty}^1\rtimes 1$ is six (\cite{MuicSp4}, Theorem 10.7 ), but the representation  $L(\nu_{\infty}^1;\nu_{\infty}^0\rtimes 1)$ is again a subrepresentation on this principal series, and appears with the multiplicity one, so we again can conclude that 
 $\cal N(\Lambda_{s, \infty}, \wit{c_1})$ is holomorphic on $ \nu_{\infty}^0 \rtimes 1_{SL_2(\R)}$ (and non-zero on each summand).

\noindent Now we examine the situation of $s=-1$ and $\chi_{\infty}=1.$ By \cite{MuicSp4}, equation (9.31) and Lemma 9.5. we see that the representation  $\nu_{\infty}^{-1} \rtimes 1_{SL_2(\R)}$ is irreducible. By our previous remark $\cal N(\Lambda_{s, \infty} , \wit{c_1})$ is holomorphic and non-zero on that representation, moreover, the image in   $\nu_{\infty}^{1} \times \nu_{\infty}^{-1}\rtimes 1$ generates an irreducible subrepresentation $\nu_{\infty}^{-1} \rtimes 1_{SL_2(\R)}(\cong \nu_{\infty}^{-1} \rtimes 1_{SL_2(\R)}).$

\noindent If $s<-1$ odd and $\chi_{\infty}=1,$  we see in Lemma 9.4 of \cite{MuicSp4} the representation  $\nu_{\infty}^{s} \rtimes 1_{SL_2(\R)}$ is irreducible, and the conclusion is clear.

\noindent If $s<-1$ is even and $\chi_{\infty}=1,$ we see that the representation  $\nu_{\infty}^{s} \rtimes 1_{SL_2(\R)}$ is reducible , of length two, (Theorem 11.1 (i) \cite{MuicSp4}). On the other subquotient (besides  $L(\chi_{\infty}\nu^{-s},\nu^1;1)$) of   $\nu_{\infty}^{s} \rtimes 1_{SL_2(\R)}$ the first intertwining operator has a pole, and the third operator does not vanish (on that subquotient). We conclude that  $\cal N(\Lambda_{s, \infty}, \wit{c_1})$ has a pole on that other subquotient (we could also argue as in the analogous non-archimedean case; namely it is easy to see that   $L(\chi_{\infty}\nu^{-s},\nu^1;1)$ cannot appear as a subrepresentation of $\nu^{-s}\times \nu^{-1}\rtimes 1$).

\noindent We examine the case when $s=-1$ and $\chi_{\infty}=sgn.$ According to Theorem 10.4 (ii) of \cite{MuicSp4}, the representation  $\nu_{\infty}^{-1} sgn\rtimes 1_{SL_2(\R)}$ is irreducible. The pole occurs for the third intertwining operator and it appears on the (induced) quotient of that intertwining operator, namely on the representation $\delta(1,2)\rtimes 1$ (here we use the notation of \cite{MuicSp4} (cf. Theorem 2.5 and Lemma 8.1). The representation $\delta(1,2)\rtimes 1$ decomposes as a sum of two tempered representations, so $\cal N(\Lambda_{s, \infty}, \wit{c_1})$ is holomorphic on  $\nu_{\infty}^{-1} sgn\rtimes 1_{SL_2(\R)}.$ The first and the second operator are holomorphic isomorphisms. Since $\cal N(\wit{c_1}(\Lambda_{s, \infty}), \wit{c_1})\cal N(\Lambda_{s, \infty}, \wit{c_1})=Id,$ and $N(\wit{c_1}(\Lambda_{s, \infty}), \wit{c_1})$ is holomorphic on $\nu_{\infty}^{1} sgn\times \nu^{-1}\rtimes 1$ it follows that $\cal N(\Lambda_{s, \infty}, \wit{c_1})$ is non-zero on $\nu_{\infty}^{-1} sgn\rtimes 1_{SL_2(\R)}.$

\noindent If $s<-1$ is odd integer and $\chi_{\infty}=sgn,$ the representation $\nu_{\infty}^ssgn\rtimes 1_{SL_2(\R)}$ is of the length two (Theorem 11.1.(i) of \cite{MuicSp4}). The first and the third operator have poles, and the second is holomorphic isomorphism. The first operator has a pole on the (induced) quotient $\delta(\nu^{-\frac{p+t}{2}},p-t)\rtimes 1$ and the third on $\delta(\nu^{\frac{p-t}{2}},p+t)\rtimes 1.$ Note that the latter  representation is a subquotient of the former (Theorem 10.3. of \cite{MuicSp4}).  Note that the Langlands quotient $L(\nu^{-s}sgn,\nu^1;1)$ is a subrepresentation of $\nu_{\infty}^ssgn\rtimes 1_{SL_2(\R)}$ and does not appear in the composition series of these two induced representations, so  $\cal N(\Lambda_{s, \infty}, \wit{c_1})$ is holomorphic on it. On the other hand, the ``other'' subquotient of $\nu_{\infty}^ssgn\rtimes 1_{SL_2(\R)}$ is $L(\delta(\nu^{\frac{p+t}{2}},p-t);1)$ and it appears with the multiplicity one in $\nu^{s}sgn \times \nu^1\rtimes 1$ as can be seen from Theorem 11.1. (i), (ii) and (iii) of \cite{MuicSp4}, so that there is a pole on the ``other'' subquotient of $\nu_{\infty}^ssgn\rtimes 1_{SL_2(\R)}$ (of order one).

\noindent If $s<-1$ is even integer and $\chi_{\infty}=sgn,$ the representation $\nu_{\infty}^ssgn\rtimes 1_{SL_2(\R)}$ is ireducible by Lema 9.4 of \cite{MuicSp4}. The first and the third  operator are holomorphic isomorphisms, and the second has a pole on the quotient $\nu^{-1}\rtimes  (X(-s,+)\oplus X(-s,-)),$ (e.g.,  cf. Lemma 7.2 of \cite{MuicSp4}). The same lemma guarantees that $\nu^{-1}\rtimes  X(-s,\varepsilon),\;\varepsilon=\pm,$ are irreducible (and are not isomorphic with  $\nu_{\infty}^ssgn\rtimes 1_{SL_2(\R)}$), so the intertwining operator  $\cal N(\Lambda_{s, \infty}, \wit{c_1})$ is holomorphic on $\nu_{\infty}^ssgn\rtimes 1_{SL_2(\R)},$ and similarly as above, we conclude that it is also non-zero there.

\end{proof}
Now we study the intertwining operators for the other relevant element of the Weyl group-which is similar, but easier that the previous case of $c_1.$

\begin{prop}
\label{prop:s} Let $p<\infty.$ The intertwining operator $\cal N(\Lambda_{s, p}, \wit{s}),$ where $s\in \R,$ is holomorphic on  $\nu_{p}^s\chi_p\rtimes 1_{SL_2(\Q_p)},$ unless $s=-2$ and $\chi_p=1;$  then it has a pole of the first order. If $p=\infty$ then we have poles precisely if $s<-1$ is even and $\chi_{\infty}=1$ and if $s<-1$ is odd and $\chi_{\infty}=sgn.$
\end{prop}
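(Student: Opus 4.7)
The plan is to observe that, since $s$ is the length-one reflection in the simple short root $e_1-e_2$, the normalized intertwining operator $\cal N(\Lambda_{s,p},\wit{s})$ acting on the full principal series is (by induction in stages) induced from the $GL_2$-intertwining operator
$$A_{GL_2}\colon\ \chi_p\nu^s\times\nu^{-1}\longrightarrow \nu^{-1}\times\chi_p\nu^s,$$
tensored with the identity on $1_{SL_2(\Q_p)}$. Its restriction to the degenerate subrepresentation $\chi_p\nu^s\rtimes 1_{SL_2(\Q_p)}$ is therefore controlled entirely by $A_{GL_2}$. This is exactly the ``first operator'' appearing in the decomposition (\ref{decomp_c1}), so the analyses already carried out inside Lemmas \ref{lem:c_1nonarch} and \ref{lem:c_1arch} for that first operator supply all the input I need.

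For $p<\infty$ I would invoke the $GL_2$-analog of Lemma \ref{norm-pol}: the normalized operator $A_{GL_2}$ is holomorphic for $Re(s)\ge 0$, and for $Re(s)<0$ it has a pole only at those points where $\chi_p\nu^s\times\nu^{-1}$ is reducible. The reducibility condition $\chi_p\nu^{s+1}=\nu^{\pm 1}$ combined with $Re(s)<0$ singles out the unique point $\chi_p=1$, $s=-2$. At this point, the final paragraph of the proof of Lemma \ref{lem:c_1nonarch} already shows that the pole is simple and supported on the subquotient $L(\nu^{3/2}St_{GL_2(\Q_p)};1)$, which appears with multiplicity one in the Jordan--H\"older content of $\nu_p^{-2}\rtimes 1_{SL_2(\Q_p)}$, while on the other irreducible constituent $L(\nu^2,\nu^1;1)$ (the trivial character) the operator is holomorphic. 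Hence the pole persists on the degenerate subrepresentation, exactly with the order stated.

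For $p=\infty$ the same reduction and the first bullet in the proof of Lemma \ref{lem:c_1arch} show that the archimedean $GL_2$-intertwiner has a first-order pole precisely at those integer $s<-1$ whose parity matches $\chi_\infty$ (even for the trivial character, odd for $sgn$). What remains is to verify, for each such $(\chi_\infty,s)$, that the pole of $A_{GL_2}$ actually survives on a Jordan--H\"older constituent of $\chi_\infty\nu^s\rtimes 1_{SL_2(\R)}$. I would argue case by case, borrowing the explicit composition series of $\chi_\infty\nu^s\rtimes 1_{SL_2(\R)}$ and of $\chi_\infty\nu^s\times\nu^{-1}\rtimes 1$ from \cite{MuicSp4} (Theorem 11.1, Lemma 9.4, Lemma 9.5, Theorem 10.7, Lemma 7.2), in the same spirit as the analysis in Lemma \ref{lem:c_1arch}. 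The main obstacle is precisely this archimedean bookkeeping: for every admissible parity class of integer $s$ one has to identify the single ``non-Langlands'' subquotient which supports the $GL_2$-pole and confirm via \cite{MuicSp4} that it sits inside $\chi_\infty\nu^s\times\nu^{-1}\rtimes 1$ with compatible multiplicity; on the Langlands subrepresentation itself, non-vanishing and holomorphy follow automatically from the functional equation $\cal N(\wit{s}(\Lambda_{s,\infty}),\wit{s})\cal N(\Lambda_{s,\infty},\wit{s})=Id$ together with holomorphy of the inverse operator, as in the closing part of the proof of Lemma \ref{lem:c_1arch}.
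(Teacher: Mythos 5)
Your proposal is correct and follows essentially the same route as the paper: the published proof of this proposition is literally a one-line reference back to Lemmas \ref{lem:c_1nonarch} and \ref{lem:c_1arch}, because $\cal N(\Lambda_{s,p},\wit{s})$ is exactly the ``first operator'' in the decomposition (\ref{decomp_c1}) whose poles and behaviour on the constituents of $\chi_p\nu^s\rtimes 1_{SL_2}$ are analyzed there, which is precisely the reduction you make. Two cosmetic quibbles that do not affect the outcome: the operator is obtained by induction in stages through the Siegel parabolic from the $GL_2$ operator (not ``tensored with the identity on $1_{SL_2(\Q_p)}$'', since the reflection $s$ does not normalize the Heisenberg Levi), and the negativity condition in the $GL_2$ analogue of Lemma \ref{norm-pol} should be that the exponents are out of order, i.e. $Re(s)<-1$ rather than $Re(s)<0$ --- though both criteria single out the same point $\chi_p=1$, $s=-2$ here.
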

\begin{proof} Already proved in Lemma \ref{lem:c_1nonarch} and Lemma \ref{lem:c_1arch}
\end{proof}
\begin{prop} 
\label{prop:sc_1}
Let $p <\infty.$
 The intertwining operator $\cal N(\Lambda_{s, p}, \wit{sc_1})=\cal N(\Lambda_{s, p}, \wit{c_2s}),$ where $s ∈ \R,$  is
 holomorphic on $\chi_p \nu^s\rtimes 1_{SL_2(\Q_p)}$ unless $\chi_p =1$ and $s = −2.$ If $p = \infty$ then
 we have poles precisely if $s < −1$ is even and $\chi_{\infty} = 1$ and if $s < −1$ is odd
 and $\chi_{\infty} = sgn.$
\end{prop}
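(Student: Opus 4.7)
The plan is to mimic the strategy of Lemmas \ref{lem:c_1nonarch} and \ref{lem:c_1arch}, exploiting the fact that $sc_1 = c_2 s$ is a reduced factorization of length two, so that $\cal N(\Lambda_{s,p}, \wit{sc_1})$ decomposes as
\[
\chi_p\nu^{s}\times \nu_p^{-1}\rtimes 1 \xrightarrow{\ \wit{s}\ } \nu_p^{-1}\times \chi_p\nu_p^{s}\rtimes 1 \xrightarrow{\ \wit{c_2}\ } \nu_p^{-1}\times \chi_p^{-1}\nu_p^{-s}\rtimes 1.
\]
This is precisely the composition of the \emph{first two} factors in the three-step decomposition of $\wit{c_1}=\wit{s}\wit{c_2}\wit{s}$ studied in Lemma \ref{lem:c_1nonarch} and Lemma \ref{lem:c_1arch}, so the individual holomorphy/pole statements for each factor can be imported verbatim from those proofs.

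For $p<\infty$, I would first recall that the $\wit{s}$-operator (a $GL_2$-induced operator on the $\chi_p\nu^s\times\nu_p^{-1}$ factor) is holomorphic unless $\chi_p=1$ and $s=-2$, and that the $\wit{c_2}$-operator, induced from the $SL_2$-intertwining on $\chi_p\nu^s\rtimes 1$, is holomorphic for $s\geq 0$ and, for $s<0$, can fail only at the reducibility points $(\chi_p^2=1,\ \chi_p\neq 1,\ s=0)$ and $(\chi_p=1,\ s=-1)$. The first of these gives no actual pole (Plancherel-measure argument already used), and the second one produces a pole only on the Steinberg quotient $\nu^{-1}\rtimes St_{SL_2(\Q_p)}$ of $\nu_p^{-1}\times \nu_p^{-1}\rtimes 1$, not on the irreducible piece $\nu_p^{-1}\rtimes 1_{SL_2(\Q_p)}$ that arises as the image of the preceding $\wit{s}$-operator from the Heisenberg-induced domain $\chi_p\nu^s\rtimes 1$. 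Thus at $(\chi_p=1,\ s=-1)$ the composition is still holomorphic, leaving only $(\chi_p=1,\ s=-2)$ as a genuine simple pole, coming from the first factor.

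For $p=\infty$, I would quote the three bullet points at the top of the proof of Lemma \ref{lem:c_1arch}: the $\wit{s}$-operator has a pole precisely when $s<-1$ is an integer, even if $\chi_\infty=1$ and odd if $\chi_\infty=\mathrm{sgn}$, while the $\wit{c_2}$-operator has a pole precisely when $s<0$ is an integer, even if $\chi_\infty=\mathrm{sgn}$ and odd if $\chi_\infty=1$. The cases claimed in the proposition coincide with the $\wit{s}$-list, so the remaining task is to show that in each of the cases $(\chi_\infty=1, s \text{ negative odd})$ and $(\chi_\infty=\mathrm{sgn}, s \text{ negative even})$, the potential pole of $\wit{c_2}$ is cancelled, i.e.\ never actually realized on the image of the preceding $\wit{s}$-operator. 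I would handle $s=-1$ (both parities of $\chi_\infty$) exactly as the corresponding paragraphs of Lemma \ref{lem:c_1arch} do: the Heisenberg-induced representation $\chi_\infty\nu^{-1}\rtimes 1_{SL_2(\R)}$ is irreducible (by \cite{MuicSp4}, Lemma 9.5 or Theorem 10.4), and the pole of $\wit{c_2}$ sits on a distinct subquotient of the target; and for $|s|\geq 2$ in the unwanted parities I would invoke the reducibility pattern of $\chi_\infty\nu^s\rtimes 1_{SL_2(\R)}$ from Lemma 9.4 and Theorem 11.1 of \cite{MuicSp4}, together with the fact that the Langlands subrepresentation $L(\chi_\infty\nu^{-s},\nu^1;1)$ is the unique subrepresentation, to confine the $\wit{c_2}$-pole to a subquotient on which the combination with $\wit{s}$ is holomorphic. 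Non-vanishing on the remaining irreducible pieces follows from the functional equation $\cal N(\wit{sc_1}(\Lambda_{s,\infty}),\widetilde{(sc_1)^{-1}})\cal N(\Lambda_{s,\infty},\wit{sc_1})=\mathrm{id}$, exactly as in the $s=-1$, $\chi_\infty=\mathrm{sgn}$ paragraph of Lemma \ref{lem:c_1arch}.

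The main obstacle is the same bookkeeping difficulty that dominates Lemma \ref{lem:c_1arch}: one must pin down, case by case and using the explicit composition series in \cite{MuicSp4,STSp4}, which Langlands subquotient of $\chi\nu^s\rtimes 1$ carries the pole of $\wit{c_2}$ and verify that it is not the subquotient generated by the image of the $\wit{s}$-operator, so that no pole propagates into the composition. Everything else is a direct citation of the normalizing factor computations in \eqref{rsc_1} and of Lemma \ref{norm-pol}.
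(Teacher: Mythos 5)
Your proposal is correct and follows exactly the route the paper intends: its proof of this proposition is literally the one-line remark that everything is ``already proved in Lemma \ref{lem:c_1nonarch} and Lemma \ref{lem:c_1arch},'' meaning precisely that $\wit{sc_1}=\wit{c_2}\wit{s}$ consists of the first two factors of the decomposition (\ref{decomp_c1}) of $\wit{c_1}$, whose poles and cancellations on $\chi\nu^s\rtimes 1$ were analyzed there case by case. Your reconstruction of that case analysis (the $\wit{s}$-factor carrying the genuine poles, the $\wit{c_2}$-factor's potential poles living on subquotients disjoint from the degenerate piece, and non-vanishing via the functional equation) matches the content of those lemmas.
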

\begin{proof} Already proved in Lemma \ref{lem:c_1nonarch} and Lemma \ref{lem:c_1arch}
\end{proof}

Now we can explicitly describe the image of the constant term of the
degenerate Eisenstein series.

\begin{thm} \label{thm-heisenberg} Assume $s\ge 0.$ Then, the poles of the Eisenstein series can come only from the poles of the normalizing factors, and then only if $\chi=1$ and $s\in\{0,1,2\}$ (by well -known general results we know that there are no poles for $s=0$).
\begin{enumerate}
\item If $\chi=1$ and $s=0$ the global Eisenstein series is holomorphic (for any choice of $f=\otimes f_p \in \Ind_{GL_1(\A)\times SL_2(\A)}(\chi \nu^{s}\otimes 1)$). For $p\le \infty$ we have $\nu_p^0 \rtimes 1_{SL_2(\Q_p)}=L(\nu_p^1;\nu_p^0\rtimes 1)\oplus L(\nu^{1/2}St_{GL_2(\Q_p)};1).$  We choose a finite set of places $S'$ and we choose $f_p\in  L(\nu^{1/2}St_{GL_2(\Q_p)};1),$ for $p\in S',$ and for some bigger finite set of places $S\supset S'$ we choose $f_p\in L(\nu_p^1;\nu_p^0\rtimes 1)$ for $p\in S\setminus S'.$ For all the places outside of $S$ we choose $f_p$ to be the normalized spherical vector. Then, if $S'$ is even, this choice gives an automorphic realization of the corresponding (irreducible) global representation in the space of automorphic forms.

\item  If $\chi=1$ and $s=1$ the global Eisenstein series is holomorphic, and for every $p\le \infty,$ the representation
 $\nu_p^1 \rtimes 1_{SL_2(\Q_p)}$ is irreducible and (\ref{de-100}) gives the embedding of the irreducible  global representation $\Ind_{GL_1(\A)\times SL_2(\A)}(\chi \nu^{1}\otimes 1)$ in the space of automorphic forms.

\item  If $\chi=1$ and $s=2$ the global Eisenstein series has a pole of the first order, and, after removing the poles, (\ref{de-100}) gives the automorphic realization of the trivial representation in the space of (square-integrable) automorphic forms.
\item Assume  $s=0$ and $\chi^2=1$ but $\chi\neq 1$ then for all $p\le \infty,$ $\chi_p\rtimes 1_{SL_2(\Q_p)}=L(\nu_p^1;T_1)\oplus L(\nu_p^1;T_2)$, where $T_1$ and $T_2$ are non-isomorphic tempered representations (the limits of the disccrete series if $p=\infty$) such that $\chi_p\rtimes 1=T_1\oplus T_2$ in $SL_2(\Q_p).$ Here $T_1$ is such that $\cal N(s(\Lambda_{s, p}), \wit{c_2})$ acts on $\nu_p^{-1}\rtimes T_1$ as the identity, and on $\nu_p^{-1}\rtimes T_2$ as minus identity (so for the spherical places, the spherical vector belongs to  $L(\nu_p^1;T_1)$). Let $S$ be a finite set of places such that for $p\notin S,$ $f_p$ is spherical. Let $S'\subset S$ be such that for $p\in S'$ we choose $f_p\in  L(\nu_p^1;T_2)$ (and for $p\in S\setminus S'\; f_p\in  L(\nu_p^1;T_1)$). With these choices, the mapping (\ref{de-100}) gives an automorphic realization of this 
irreducible global representation if $|S'|$ is even.
\item In the rest of the cases (we still assume $s\ge 0$) which are not cover above, the embedding (\ref{de-100}) is holomorphic and gives an automorphic realization of the whole representation  $Ind_{GL_1(\A)\times SL_2(\A)}(\chi \nu^{s}\otimes 1).$
\end{enumerate}
\end{thm}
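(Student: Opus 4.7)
The plan is to start from the constant term decomposition (\ref{de-3}) over the four Weyl elements $w\in\{1,c_1,s,sc_1\}$ with $w(2e_2)>0$, and to combine it with the holomorphicity results for the local normalized operators just established. By Lemma \ref{lem:c_1nonarch}, Lemma \ref{lem:c_1arch}, Proposition \ref{prop:s} and Proposition \ref{prop:sc_1}, every local operator $\cal N(\Lambda_{s,p},\wit{w})$ appearing in (\ref{de-3}) is holomorphic once $s\ge 0$. Consequently, any pole of the constant term, hence of $E(s,f)$, must come entirely from the global normalizing factors $r(\Lambda_s,w)^{-1}$, and by Lemma \ref{lem:normalization} these are holomorphic for $s\ge 0$ unless $\chi=1$, in which case the only candidate poles lie at $s\in\{0,1,2\}$. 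This immediately disposes of part (5): the normalizing factors are holomorphic, each local operator is a non-zero holomorphic isomorphism on the relevant irreducible piece, and so (\ref{de-100}) is a well-defined non-trivial intertwining map; injectivity on the full induced representation then follows from local irreducibility.

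Part (3) is the cleanest singular case. At $(\chi,s)=(1,2)$ only $r(\Lambda_s,c_1)^{-1}$ is singular, with a simple pole, so $(s-2)E(s,f)|_{s=2}$ reduces to a single surviving summand built out of $\cal N(\Lambda_{2,p},\wit{c_1})$. Using the identification in Lemma \ref{lem:c_1nonarch} of the sub/quotient structure of $\nu^{-2}\rtimes 1_{SL_2(\Q_p)}$ (and its archimedean analogue) together with the functional equation of the normalized intertwining operators, one sees that the image factors through the unique submodule $L(\nu^2,\nu^1;1)$ at every place, i.e.\ through the trivial representation of $Sp_4(\A)$; the resulting automorphic form is square-integrable by the standard residual-spectrum criterion.

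For part (2), at $(\chi,s)=(1,1)$ both $r(\Lambda_s,c_1)^{-1}$ and $r(\Lambda_s,sc_1)^{-1}$ have first-order poles, and the plan is to show that the residues of these two summands cancel in the constant term. Concretely, since $\nu_p^1\rtimes 1_{SL_2(\Q_p)}$ is irreducible at every place, the two local operators $\cal N(\Lambda_{1,p},\wit{c_1})$ and $\cal N(\Lambda_{1,p},\wit{sc_1})$ are proportional on this irreducible space, and a matching of the two normalization factors at $s=1$ (computable from (\ref{rc_1}) and (\ref{rsc_1}) using the functional equation of $\zeta$) forces $(s-1)E(s,f)\to 0$. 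Holomorphy of $E(s,f)$ then follows, and the map (\ref{de-100}) with $l=0$ realizes the full globally irreducible representation in $\cal A$.

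The main obstacle, which carries the real content of parts (1) and (4), is the parity analysis governing the non-vanishing of the sum over $w$ when the local induced representation splits as a direct sum. Here the normalizing factors do have poles, but $s=0$ is on the unitary axis and general theory prevents $E(s,f)$ from having a pole there; what must be produced is the precise finite value. The plan is to use the explicit identification from Lemma \ref{lem:c_1nonarch} (at $p<\infty$) and its archimedean counterpart in Lemma \ref{lem:c_1arch} which pin down $\cal N(\Lambda_{0,p},\wit{w})$ to act as $+\mathrm{Id}$ on the spherical Langlands summand and as $-\mathrm{Id}$ on the complementary summand; combined with the explicit zero orders of the four normalizing factors at $s=0$, the four-term sum in (\ref{de-3}) collapses to a quantity proportional to $1+(-1)^{|S'|}$ on the piece chosen away from the spherical component. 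The delicate point, and the bulk of the work, is the careful simultaneous bookkeeping of normalization orders with local signs at each place and for each of the four Weyl elements; the parity condition $|S'|$ even in the statement is exactly the resulting non-vanishing criterion.
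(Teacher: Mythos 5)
Your proposal follows the paper's proof essentially verbatim: holomorphy of all local normalized operators for $s\ge 0$ reduces everything to the normalizing factors (hence to $\chi=1$, $s\in\{0,1,2\}$, plus the split case $\chi^2=1$, $s=0$); the residue at $s=2$ is the single $c_1$--term, whose local images are pinned down as $L(\nu^2,\nu^1;1)$ by comparing the unique quotient of $\nu_p^2\rtimes 1$ with the unique subrepresentation of $\nu_p^{-2}\times\nu_p^{-1}\rtimes 1$; and the cases $s=0,1$ are handled by pairing Weyl elements whose $r$--factors have cancelling poles together with the sign $(-1)^{|S'|}$ coming from the $\pm\mathrm{Id}$ action on the two local summands. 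The only caveat is that what you defer as ``bookkeeping'' is exactly where the paper does its real work: one must show the paired local operators literally coincide (via $\cal N(\Lambda_{s,p},\wit{sc_2s})=\cal N(c_2s(\Lambda_{s,p}),\wit{s})\cal N(\Lambda_{s,p},\wit{c_2s})$ with the outer factor equal to the identity at the relevant point), so that the proportionality constant in your part (2) is $+1$ and not $-1$, and one must note that for $w=s,c_2s$ at $s=0$ the operators are not endomorphisms and so do not ``act as $\pm\mathrm{Id}$'' --- their contribution is instead controlled by the holomorphy and nonvanishing of $r(\Lambda_s,s)^{-1}+r(\Lambda_s,c_2s)^{-1}$ (evaluated via the functional equation) together with the fact that the non-spherical summand lies in the kernel of $\cal N(\Lambda_{0,p},\wit{s})$.
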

\begin{proof} Since s ≥ 0, in the expression (\ref{de-3}) all the local intertwining operators
are holomorphic (by Lemma \ref{lem:c_1nonarch}, Lemma \ref{lem:c_1arch}, Proposition \ref{prop:s} and Proposition \ref{prop:sc_1})  and the poles can only come from the normalizing factors,
and then, only if $\chi = 1$  (cf. Lemma \ref{lem:normalization}). We now assume that $\chi = 1.$

First assume $s=0.$  We note that then $s(\Lambda_s)=c_2s(\Lambda_s).$ We also claim that the intertwining operators $\cal N(\Lambda_{s, p}, \wit{c_2s})$ and $\cal N(\Lambda_{s, p}, \wit{s})$ have the same effect on $\nu^0\rtimes 1_{SL_2(\Q_p)}$ (for evry $p$). Indeed,  $\cal N(\Lambda_{s, p}, \wit{c_2s})=\cal N(s(\Lambda_{s, p}), \wit{c_2})\cal N(\Lambda_{s, p}, \wit{s}).$
Here the operator $\cal N(s(\Lambda_{s, p}), \wit{c_2})$ is induced from the $SL_2(\Q_p)$ operator $\nu_p^0\rtimes 1\to \nu_p^0\rtimes 1,$ which is holomorphic isomorphism (actually, an identity)  on an irreducible representation $\nu_p^0\rtimes 1\to \nu_p^0\rtimes 1$ (for all $p\le \infty$).

We also have that $c_1(\Lambda_s)=\Lambda_s$ for $s=0.$ Note that for every $p\le \infty,$ by   Lemma \ref{lem:c_1nonarch} and Lemma \ref{lem:c_1arch}, $\nu_p^0\rtimes  1_{SL_2(\Q_p)}=L(\nu_{p}^{1/2}St_{GL_2(\Q_p)};1)\oplus L(\nu_{p}^1;\nu_{p}^0\rtimes 1).$ and on $ L(\nu_{p}^1;\nu_{p}^0\rtimes 1)$ $\cal N(\Lambda_{s, p}, \wit{c_1})$ acts as the identity, and on $L(\nu_{p}^{1/2}St_{GL_2(\Q_p)};1)$ as minus identity.

We conclude that (\ref{de-3}) for $s=0$ becomes
\begin{gather}
\label{s=0}
E_{const}(f,s)=f_s+r(\Lambda_s,c_1)^{-1}(\otimes_{p\in S}\cal N(\Lambda_{s, p}, \wit{c_1})f_{p,s}\otimes (\otimes_{p\notin S}f_{c_1(s),p})+\\
(r(\Lambda_s,s)^{-1}+r(\Lambda_s,c_2s)^{-1})\otimes_{p\in S}\cal N(\Lambda_{s, p}, \wit{s})f_{p,s}\otimes (\otimes_{p\notin S}f_{s(s),p}).
\notag
\end{gather}
Here we denote by $s$ a real number (in this case equal to $0$) and an element of the Weyl group. Since we are dealing with the trivial global character over $\Q,$ all the global $\varepsilon$ factors are trivial, and we get that $r(\Lambda_s,s)^{-1}+r(\Lambda_s,c_2s)^{-1}=\frac{1}{L(s+2,1)}{(L(-s,1)+L(s,1))},$ so this expression is holomorphic and non-zero.
We conclude that $E_{const}(f,s)$ is holomorphic for $s=0$ for any choice of $f_s=\otimes f_{s,p}$ belonging to the global induced representation $Ind_{GL_1(\A)\times SL_2(\A)}(\chi \nu^{s}\otimes 1),$ but this is well-known from the general results of Langlands. But we can now describe the image of Eisenstein series in the space of automorphic forms. 

First we analyze the first row in (\ref{s=0}). Let $S'\subset S$ be a finite set of primes such that for them we choose $f_p\in L(\nu_{p}^{1/2}St_{GL_2(\Q_p)};1)$ and for rest of the places $S\setminus S'$ we chose $f_p\in L(\nu_{p}^1;\nu_{p}^0\rtimes 1).$ For $p\notin S$ we choose $f_p$ to be the normalized spherical vector. Then, we have  $\lim_{s\to 0}r(\Lambda_s,c_1)^{-1}=1,$ and $\cal N(\Lambda_{s, p}, \wit{c_1})f_{p,s}=-f_{p,s},\;p\in S',$  $\cal N(\Lambda_{s, p}, \wit{c_1})f_{p,s}=f_{p,s},\;p\notin S'.$
Thus, the first line of (\ref{s=0}) becomes $f_0+(-1)^{|S'|}f_0.$ We conclude that this is non-zero if $|S'|$ is even and this means that the global representation (a subrepresentation of  $Ind_{GL_1(\A)\times SL_2(\A)}(\chi \nu^{s}\otimes 1)$) with the local components consisting of $L(\nu_{p}^{1/2}St_{GL_2(\Q_p)};1)$ on an even number of places and $L(\nu_{p}^1;\nu_{p}^0\rtimes 1)$ on the rest of the places, is automorphic.

Now we analyze the second line. For $p\in S,$ the image $\cal N(\Lambda_{s, p}, \wit{s})f_{p,s}$ is non-zero if $f_{p,s}$ is not in the kernel of this operator (and the kernel is  $\nu^{-1/2}St_{GL_2(\Q_p)}\rtimes 1.$ Thus, if we pick $f_{p,s}$ from  $L(\nu_{p}^1;\nu_{p}^0\rtimes 1)$ for every $p\in S,$ we'll get a non-zero contribution.
This now means that the projection of the constant term to part of  the sum spanned by the images of the intertwining operators with respect to the Weyl group elements $s$ and $c_2s$ gives an automorphic realization of the global representation whose every local component is $L(\nu_{p}^1;\nu_{p}^0\rtimes 1)$-so we get only global representations which form a subset of the ones we obtained by analyzing the first line.

Now we analyze the case $s=1.$ Then, note that for each $p\le \infty$ the representation $\nu_p^1\rtimes 1_{SL_2(\Q_p)}$ is irreducible (as was noted in Lemma \ref{lem:c_1nonarch} and Lemma \ref{lem:c_1arch}), and thus spherical. In that case $c_2s(\Lambda_s)=sc_2s(\Lambda_s)=c_1(\Lambda_s).$ Moreover, $\cal N(\Lambda_{s, p}, \wit{sc_2s})=\cal N(c_2s(\Lambda_{s, p}), \wit{s})\cal N(\Lambda_{s, p}, \wit{c_2s}).$  Note that $\cal N(c_2s(\Lambda_{s, p}), \wit{s})$ is for $s=1$ identity operator induced from the $GL_2$ operator $\nu_p^{-1}\times \nu_p^{-1} \to \nu_p^{-1}\times \nu_p^{-1}.$
Then, we just sum $r(\Lambda_s,sc_2s)^{-1}+r(\Lambda_s,c_2s)^{-1}.$ Again we get that the poles cancel, and we obtain a non-zero holomorphic function for $s=1.$
Thus, the expression
\[(r(\Lambda_s,sc_2s)^{-1}+r(\Lambda_s,c_2s)^{-1}) \otimes_{p\in S}\cal N(\Lambda_{s, p}, \wit{c_2s})f_{p,s}\otimes (\otimes_{p\notin S}f_{c_2s(s),p})\]
gives a non-zero contribution to the constant term of Eisenstein series and this cannot cancel the identity contribution. This means that we have obtained an automorphic realization (through Eisenstein series) of the global (irreducible representation) $Ind_{GL_1(\A)\times SL_2(\A)}( \nu^{1}\otimes 1).$

For $s=2$ we have
\begin{gather*}
\lim_{s\to 2}(s-2)E_{const}(f,s)=(\lim_{s\to 2}(s-2)r(\Lambda_s,sc_2s)^{-1}) \otimes_{p\in S}\cal N(\Lambda_{s, p}, \wit{sc_2s})f_{p,s}\\
\otimes (\otimes_{p\notin S}f_{sc_2s(s),p}).
\end{gather*} We know that $\cal N(\Lambda_{s, p}, \wit{sc_2s})$ is holomorphic (and non-zero) on $\nu_p^2\rtimes 1_{SL_2(\Q_p)},$ for every $p\le \infty.$
Note that, because of the Langlands classification, $L(\nu_p^{2},\nu_p^{1};1)$ is the unique quotient of $\nu_p^2\rtimes 1_{SL_2(\Q_p)}.$ On the other hand $ \cal N(\Lambda_{s, p}, \wit{sc_2s})(\nu_p^2\rtimes 1_{SL_2(\Q_p)})\subset \nu_p^{-2}\times \nu_p^{-1}\rtimes 1$ and this representation has a unique irreducible subrepresentation; namely $L(\nu_p^{2},\nu_p^{1};1).$ This ensures that  $ \cal N(\Lambda_{s, p}, \wit{sc_2s})(\nu_p^2\rtimes 1_{SL_2(\Q_p)})=L(\nu_p^{2},\nu_p^{1};1),$ for every $p\le \infty.$ Note that $L(\nu_p^{2},\nu_p^{1};1)$ is actually a trivial representation, thus this (normalized) Eisenstein series gives a realization of the trivial representation as the irreducible subrepresentation in the space of square--integrable automorphic forms (since the constant term along the minimal parabolic has exponent $(-2,-1).$

If $\chi\neq 1,$ or $\chi=1$ but $s\notin\{0,1,2\}$ we have the following situation.
Let $w\in W,\; w(2e_2)>0.$ Then $w(\Lambda_s)\neq \Lambda_s,$ unless $\chi^2=1$ and $s=0;$ then $sc_2s(\Lambda_s)=\Lambda_s.$
So if $\chi^2\neq 1$ or $s\neq 0,$ (and we are not in the situations already covered) nothing will cancel the identity contribution; this gives the automorphic realization of the whole representation  $Ind_{GL_1(\A)\times SL_2(\A)}(\chi \nu^{s}\otimes 1).$

Now assume $s=0$ and $\chi^2=1$ but $\chi\neq 1.$ Then, for every $p\le \infty$ such that $\chi_p\neq 1,$ the representation $\chi_p\rtimes 1$  (of $SL_2(\Q_p)$) is reducible, and sum of two irreducible tempered representations, say $T_1$ and $T_2$ (the limits of the discrete series if $p=\infty$).  Note that $\Lambda_s=sc_2s(\Lambda_s)$ and $s(\Lambda_s)=c_2s(\Lambda_s).$ Assume that on $T_1$ the normalized intertwining operator $ \chi_p\rtimes 1\to \chi_p\rtimes 1$ acts as the identity and on $T_2$ as -identity. We also have that $\chi_p\rtimes 1_{SL_2(\Q_p)}=L( \nu_p^1;T_1)\oplus L( \nu_p^1;T_2)$ for all $p$ (\cite{STSp4}, Proposition 5.4 and \cite{MuicSp4} Lema 9.6). We conclude that $\cal N(\Lambda_{s, p}, \wit{s})$ acts on $\chi_p\rtimes 1_{SL_2(\Q_p)}$ as an holomorphic isomorphism, and then $\cal N(s(\Lambda_{s, p}), \wit{c_2})$ acts on $\nu^{-1}\rtimes T_1$ as identity, and on $\nu^{-1}\rtimes T_2$ as minus identity. Now again $\cal N(c_2s(\Lambda_{s, p}), \wit{s})$ is a holomorphic isomorphism. Let $S$ be a finite set of places such that for $p\notin S,$ $f_p$ is the normalized spherical vector. For a subset $S'\subset S$ such that for every $p\in S'$ $\chi_p\neq 1,$ we choose $f_p$ to belong to  $L( \nu_p^1;T_2)$ (and $f_p$ belongs to  $L( \nu_p^1;T_1)$ for $p\in S\setminus S'$).
We then have

\begin{gather}
\label{eq:Heisen1}
E_{const}(f,s)=f_s+r(\Lambda_s,sc_2s)^{-1}(-1)^{|S'|}f_s+\\
(r(\Lambda_s,s)^{-1}+(-1)^{|S'|}r(\Lambda_s,c_2s)^{-1})\otimes_{p\in S}\cal N(\Lambda_{s, p}, \wit{s})f_{p,s}\otimes (\otimes_{p\notin S}f_{s(s),p}).
\notag
\end{gather}
We use the functional equation (\cite{Ramakrishnan_Valenza}, p.279,)  $L(1-s,\chi^{-1})=\varepsilon(s,\chi)L(s,\chi).$ Now, according to that the expression for $r(\Lambda_s,sc_2s)^{-1}$ becomes
\[r(\Lambda_s,sc_2s)^{-1}=\frac{L(-s+2,\chi)\varepsilon(-s+2,\chi)}{L(s+2,\chi)\varepsilon(s+2,\chi)\varepsilon(s,\chi)\varepsilon(s+1,\chi)}.\]
Since $L(s,\chi)$ is holomorphic for $s\in \C$ we have $\lim_{s\to 0}\frac{L(-s+2,\chi)\varepsilon(-s+2,\chi)}{L(s+2,\chi)\varepsilon(s+2,\chi)}=1$ so that $\lim_{s\to 0}r(\Lambda_s,sc_2s)^{-1}=\frac{1}{\varepsilon(0,\chi)\varepsilon(1,\chi)}.$ Further, we have $L(-s,\chi)=\varepsilon(1+s,\chi)L(1+s,\chi).$ If we multiply these two functional equations and let $s=0$, we get $\varepsilon (s,\chi)\varepsilon(s+1,\chi)=1$ (since $L(0,\chi)L(1,\chi)\neq 0$).
%The Tate root number (Goldfeld, Hundley, p.55 is for, example, primitive Dirichlet character, equal to $C(\frac{\pi}{q})^{%s-1/2}.$ Then  $\lim_{s\to 0}r(\Lambda_s,sc_2s)^{-1}=\frac{1}{C^2}.$ 
Then, the factor in the first line of (\ref{eq:Heisen1}) becomes $1+(-1)^{|S'|}.$ We conclude that if $|S'|$ is odd that the first line of  (\ref{eq:Heisen1}) vanishes. The  numerical factor in the second line of (\ref{eq:Heisen1}) becomes
$\frac{1}{L(s+2,\chi)\varepsilon(s+2,\chi)\varepsilon(s+1,\chi)}(L(-s,\chi)+(-1)^{|S'|}L(s,\chi)).$ We conclude that for $|S'|$ odd the second line also vanishes. So,we can get a  non-zero contribution only if $|S'|$ is even.
 \end{proof}

\begin{thm} \label{thm-heisenberg-1} Assume $s<0.$ 
\begin{enumerate}
\item If $-1<s<0$  the Eisenstein series is holomorphic and  (\ref{de-100}) gives an embedding of the irreducible 
representation  $Ind_{GL_1(\A)\times SL_2(\A)}(\chi \nu^{s}\otimes 1)$ in the space of automorphic forms. 
\item If $s=-1$ and $\chi\neq 1$ the result is analogous to the previous case.
\item If $s=-1$ and $\chi=1$  the Eisenstein series is identically zero on $Ind_{GL_1(\A)\times SL_2(\A)}( \nu^{-1}\otimes 1).$
\item If $-2<s<-1$ all the (inverses) of the (non-trivial) normalizing factors can have a pole  (of the same order) comming from the zero of the $L$--function in the denominator. After the normalization, we have an embedding of the representation  $Ind_{GL_1(\A)\times SL_2(\A)}(\chi \nu^{s}\otimes 1)$ in the space of automorphic forms.
\item  If $s=-2$ and $\chi=1$ then, if we pick  $f_p\in L(\nu^2,\nu^1;1)$ (the trivial representation) for all $p,$ then
$E_{const}(f,-2)=f_{-2}$ so  (\ref{de-100}) gives an embedding of the trivial representation in the space of (square-integrable) automorphic forms. If we pick at exactly one finite place $f_p\in L(\nu_p^{3/2}St_{GL_2(\Q_p)};1)$ (and on the rest of the places the trivial representation), the Eisenstein series are holomorphic on this global representation and the image is, on that one place, an irreducible subrepresentation isomorphic to $L(\nu_p^{3/2}St_{GL_2(\Q_p)};1),$ on the rest of the places the image spans a representation of the length two (in semisimplification $L(\nu^2,\nu^1;1)+L(\nu_p^{3/2}St_{GL_2(\Q_p)};1)$).
Analogously, if we pick at the finite number of place, say $|S|,$ a vector from $L(\nu_p^{3/2}St_{GL_2(\Q_p)};1),$ the Eisenstein series have a pole of order $|S|-1$ (so we can get a pole of any finite order), and after normalization, the local images are irreducible for the ramified choice of subquotient, and of the length equal to two for the choice of the unramified subquotient.
\item   If $s=-2$ and $\chi\neq 1$ we have the following. For each $p$ such that $\chi_p=1$ the local intertwining operators have a pole if $f_p\in  L(\nu_p^{3/2}St_{GL_2(\Q_p)};1),$ and if $\chi_p\neq 1$ there are no poles for the intertwining operators (all isomorphisms)  and the image is isomorphic to the representation $\chi_p\nu_p^{-2}\rtimes 1_{SL_2(\Q_p)}.$  The discussion about the image is analogous to the previous case when $\chi_p=1,$ and we can obtain a pole of order $|S|$ if, for every $p\in S$ $\chi_p=1$ and $f_p\in L(\nu_p^{3/2}St_{GL_2(\Q_p)};1).$

\item Assume $s<-2.$ Then unless $s$ is even integer and $\chi_{\infty}=1$ or $s$ is odd integer and $\chi_{\infty}=sgn,$ the Eisenstein series is always holomorphic and (\ref{de-100}) gives an embedding of  $Ind_{GL_1(\A)\times SL_2(\A)}(\chi \nu^{s}\otimes 1)$ in the space of automorphic forms. If  $s$ is even integer and $\chi_{\infty}=1$ or $s$ is odd integer and $\chi_{\infty}=sgn,$ and (in both of these cases) we pick $f_{\infty}\in L(\delta\nu^{\frac{-s+1}{2}}, -s-1)$ (notation \cite{MuicSp4} Theorem 11.1(i)), the Eisenstein series have a pole on $\otimes f_p$ of the first order. After the normalization, the image on the non-archimedean place spans an irreducible representation, and on the archimedean place it spans a representation $\chi_{\infty}\nu^{-s}\rtimes 1_{SL_2(\R)}.$  
\end{enumerate}
\end{thm}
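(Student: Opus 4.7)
The plan is to evaluate the constant term via (\ref{de-3}), which for the Heisenberg parabolic is the sum over $w \in \{1, c_1, s, sc_1\}$. For each $s_0 < 0$ covered by the statement I would (i) compute the order of pole or zero of each normalizing factor $r(\Lambda_{s_0}, w)^{-1}$ using (\ref{rc_1})--(\ref{rsc_1}) and Lemma \ref{lem:normalization}(2); (ii) determine where the local operators $\mathcal{N}(\Lambda_{s_0, p}, \tilde{w})$ are singular and on which irreducible subquotient of $\chi_p\nu_p^{s_0}\rtimes 1_{SL_2(\Q_p)}$ using Lemma \ref{lem:c_1nonarch}, Lemma \ref{lem:c_1arch}, Proposition \ref{prop:s}, Proposition \ref{prop:sc_1}; and (iii) combine these into the global order of pole and the image of (\ref{de-100}). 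A useful preliminary observation is that for $s<0$ the equality $w(\Lambda_s) = \Lambda_s$ for nontrivial $w$ happens only at $(\chi, s_0) = (1, -1)$ with $w = s$, so in all other cases only the identity summand $f_s$ lies in the global induced $\mathrm{Ind}_{P_1(\A)}^{Sp_4(\A)}(\chi\nu^{s_0} \otimes 1)$.

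For the generic ranges $-1 < s < 0$ and $s = -1$ with $\chi \neq 1$, all $r^{-1}$ are finite and nonzero and the local operators are holomorphic isomorphisms, so the identity term yields the embedding. For $-2 < s < -1$, the three nontrivial $r^{-1}$ share the single denominator factor $L(s+2,\chi)$ whose possible zero forces a common pole of the same order in all three; after renormalization the three Weyl residues land in distinct principal series outside the Heisenberg orbit, and the identity term again gives the embedding. For $s = -1$, $\chi = 1$, the pole of $\zeta(1)$ in the shared denominator makes $r(\Lambda_{-1}, c_1)^{-1}$ and $r(\Lambda_{-1}, sc_1)^{-1}$ vanish while $r(\Lambda_{-1}, s)^{-1}$ stays finite; the surviving $w=1$ and $w=s$ contributions both live in $\mathrm{Ind}(\nu^{-1}\otimes 1_{SL_2})$ (since $s(\Lambda_{-1}) = \Lambda_{-1}$), and a direct computation using the explicit description of $\mathcal{N}(\Lambda_{-1,p}, \tilde{s})$ on the irreducible $\nu_p^{-1}\rtimes 1_{SL_2(\Q_p)}$ from Lemma \ref{lem:c_1nonarch} and Lemma \ref{lem:c_1arch} shows these two contributions sum to zero, giving $E(-1, f) \equiv 0$.

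For $s = -2$, numerator zeros of $r^{-1}$ come from trivial zeros of $\zeta$: the value $\zeta(-2) = 0$ in the numerator forces $r(\Lambda_{-2}, sc_1)^{-1}$ to vanish, while $r(\Lambda_{-2}, c_1)^{-1}$ and $r(\Lambda_{-2}, s)^{-1}$ are finite nonzero. At every place all three local operators $\mathcal{N}(\Lambda_{-2, p}, \tilde{w})$ acquire a simple pole on the subquotient $L(\nu_p^{3/2}\mathrm{St}_{GL_2(\Q_p)}; 1)$ of $\nu_p^{-2}\rtimes 1_{SL_2(\Q_p)}$ (by Lemma \ref{lem:c_1nonarch} together with the analogous archimedean Lemma \ref{lem:c_1arch}). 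Choosing $f_p$ in the trivial-representation subquotient $L(\nu^2, \nu^1; 1)$ everywhere kills all local singularities and reduces $E_{\mathrm{const}}(f, -2)$ to $f_{-2}$, realizing the trivial representation as a square-integrable automorphic form (its exponent $(-2, -1)$ along the minimal parabolic sits in the negative Weyl chamber). Choosing $f_p \in L(\nu_p^{3/2}\mathrm{St}; 1)$ at $|S|$ finite places inflates the $w = c_1$ and $w = s$ summands to pole order $|S|$ and the $w = sc_1$ summand to order $|S|-1$; the theorem's claimed net order $|S|-1$ requires a leading-order cancellation of the residues of the $w=c_1$ and $w=s$ contributions, matched via the global functional equation for $\zeta$. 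The case $\chi\neq 1$ at $s=-2$ is parallel, with $|S|$ restricted to places where $\chi_p=1$. Finally for $s<-2$, Lemma \ref{lem:normalization}(2) gives holomorphy of all normalizing factors (outside the critical strip), so any pole is archimedean; Lemma \ref{lem:c_1arch}, Proposition \ref{prop:s}, and Proposition \ref{prop:sc_1} pin these poles to integer $s$ of the parity matching $\chi_\infty$, and picking $f_\infty \in L(\delta\nu^{\frac{-s+1}{2}}, -s-1)$ as in \cite{MuicSp4}, Theorem 11.1(i) activates a simple pole in a single Weyl summand whose normalized residue yields the claimed image.

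The main obstacle will be the $s = -2$ case: one must track, place-by-place and Weyl-by-Weyl, how the simple local poles on the Steinberg subquotient compose with the finite or zero values of the normalizing factors, and then show that the leading-order residues of the $w = c_1$ and $w = s$ contributions — which a priori live in three distinct principal series $\mathrm{Ind}(c_1\Lambda_{-2})$, $\mathrm{Ind}(s\Lambda_{-2})$, $\mathrm{Ind}(sc_1\Lambda_{-2})$ — combine with the vanishing $r(\Lambda_{-2}, sc_1)^{-1}$ to yield precisely the net order $|S| - 1$ rather than $|S|$; the precise residue identification across these different principal series via the functional equation is the most delicate technical step.
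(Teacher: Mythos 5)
Your overall strategy---computing the order of each $r(\Lambda_s,w)^{-1}$ from (\ref{rc_1})--(\ref{rsc_1}), locating the local singularities via Lemmas \ref{lem:c_1nonarch} and \ref{lem:c_1arch} and Propositions \ref{prop:s} and \ref{prop:sc_1}, and exploiting the fact that the Weyl summands lie in pairwise distinct principal series except at $(\chi,s)=(1,-1)$---is the paper's strategy, and your treatment of the ranges $-1<s<0$, $s=-1$, $-2<s<-1$ and $s<-2$ is essentially correct. (One phrasing slip: for $-2<s<-1$, if $L(s+2,\chi)$ vanishes then after multiplying by $(s-s_0)^l$ the identity term dies, and the embedding is realized by the non-cancelling residues of the three nontrivial summands, not by the identity term.)

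The genuine gap is at $s=-2$. The global $L$-functions in (\ref{de-5}) are the \emph{completed} $L$-functions, being by definition products of local factors over all places including $\infty$; this is exactly what you implicitly used at $s=-1$, where you need $L(0,1)$ to have a pole so that $r(\Lambda_{-1},s)^{-1}$ tends to the finite nonzero value $-1$. The completed zeta function has no trivial zeros, so your appeal to $\zeta(-2)=0$ is not available. Instead, all three of $r(\Lambda_{-2},c_1)^{-1}$, $r(\Lambda_{-2},s)^{-1}$, $r(\Lambda_{-2},sc_1)^{-1}$ vanish to order exactly one, because their common denominator factor $L(s+2,1)$ has a simple pole at $s=-2$ while the respective numerators $L(-3,1)$, $L(-1,1)$, $L(-2,1)$ are finite and nonzero. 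Hence each nontrivial Weyl summand separately carries net order $|S|-1$ (local poles of order $|S|$ against a simple zero of the normalizing factor), and no cancellation between summands is needed. Your proposed mechanism---a leading-order cancellation between the $w=c_1$ and $w=s$ residues---cannot occur, precisely because, as you yourself note, those contributions lie in the distinct principal series $\Ind(c_1\Lambda_{-2})$ and $\Ind(s\Lambda_{-2})$; with your values of the normalizing factors the constant term would have a pole of order $|S|$, contradicting part (5) of the theorem. The same miscount would also obscure part (6): for $\chi\neq 1$ the factor $L(s+2,\chi)=L(0,\chi)$ is finite and nonzero, the normalizing factors do not vanish, and that is why the order there is $|S|$ rather than $|S|-1$.
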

\begin{proof}
Assume $-1<s<0.$ The claim is then obvious.

In the case $s=-1$  if $\chi\neq 1$ the Eisenstein series is holomorphic, the other contributions in the constant term cannot cancel the identity contribution, the global representation is  again irreducible (Lemma 9.5. or Theorem 10.4 of 
\cite{MuicSp4}), so again (\ref{de-100})  gives an embedding in the space of automorphic forms.

On the other hand, if $s=-1$ and $\chi=1,$ the global representation is again irreducible, and intertwining operators holomorphic) but the normalizing factors $r(\Lambda_s, c_1)^{-1}$ and $r(\Lambda_s,c_2s)^{-1}$ vanish. On the other hand, $\lim_{s\to -1}r(\Lambda_s,s)^{-1}=-\frac{1}{\varepsilon (1,1)}=-1$ and $\cal N(\Lambda_{s, p}, \wit{s})f_{p,s}$ is the identity. In this way, (\ref{de-3}) comes down to
\[E_{const}(f,-1)=0.\]
Now assume $-2<s<-1.$ Then, the normalized intertwining operators  are all holomorphic and the representation  $Ind_{GL_1(\A)\times SL_2(\A)}(\chi \nu^{s}\otimes 1)$ is irreducible (obvious, e.g., Theorem 12.1. (ii) \cite{MuicSp4}). The (inverses of )normalizing factors might have a pole (and the order of the possible pole is equal to the order of the zero of $L(s+2,\chi),$ and this happens for all the (non-trivial) normalizing factors. Note that, for example $sc_2s(\Lambda_s)\neq s(\Lambda_s),$ so after the possible removing of the pole, the contributions according to the different element of the Weyl group do not cancel, so by (\ref{de-3}) we have an embedding of this global representation into the space of automorphic forms.

Assume $s=-2 $ and $\chi=1.$ Then  all the nontrivial normalizing factors vanish because of the poles in the denominator. On the other hand, if we pick $f_p\in L(\nu^2,\nu^1;1)$ for all $w$ appearing in the expression for the constant term,  $\cal N(\Lambda_{s, p}, \wit{w})f_p$ is holomorphic and non-zero, and if we pick $f_p\in L(\nu^{3/2}St_{GL_2(\Q_p)};1)$ we have a pole for each such $w,$ analogously for the archimedean places (Lemma \ref{lem:c_1nonarch}, Lemma \ref{lem:c_1arch}, Proposition \ref{prop:s}, Proposition \ref{prop:sc_1}). We conclude that by picking $f_p \in L(\nu^2,\nu^1;1,)$ for each $p\le \infty,$
the contributions in the constant term of Eisenstein series (from all the non-trivial elements of the Weyl group appearing there)
will be zero,so that we would have $E_{const}(f,-2)=f_{-2}.$ This would imply that the global representation consisting of  $L(\nu_p^2,\nu_p^1;1,)$ for every $p\le \infty$ appears in the space of automorphic forms through (\ref{de-100}), moreover the appearance in $\cal A\left(Sp_4(\Q)\backslash Sp_4(\A)\right)$ is in the space of square--integrable automorphic forms
(this is well-known); $L(\nu_p^2,\nu_p^1;1)$ is the trivial character). On the other hand, assume  we pick  $f_p\in L(\nu^{3/2}St_{GL_2(\Q_p)};1)$ at exactly one (say, finite) place from $S.$ Then, the contribution $r(\Lambda_s, w)^{-1}\otimes \otimes_{p\in S}\cal N(\Lambda_{s, p}, \wit{w})f_{p,s}\otimes (\otimes_{p\notin S}f_{w(s),p})$ is holomorphic (for each $w\neq 1).$  To study the image, we examine what is happening on each place. On that specific  place, $f_p$ spans the representation $L(\nu^{3/2}St_{GL_2(\Q_p)};1)$ which is s subrepresentation in  $Ind_{B}^{Sp_4(\Q_p)}(w( \nu_p^{-2}\otimes \nu_p^{-1})).$ On the other places (in $S$ and outside of $S$)) it is enough to see what kind of space the spherical vector generates. It generates a subspace of length  two in  $Ind_{B}^{Sp_4(\Q_p)}(w( \nu_p^{-2}\otimes \nu_p^{-1}))$ where $w\in \{s,c_2s,sc_2s\}$ : it cannot be an irreducible subrepresentation in any of those induced representations (a simple Jacquet module argument); on the other hand it generates a subrepresentation of length two in  $Ind_{B}^{Sp_4(\Q_p)}(sc_2s( \nu_p^{-2}\otimes \nu_p^{-1})),$ and the other representations are isomorphic to it. The generated representation is (in semisimplification) $L(\nu_p^2,\nu_p^{1};1)+  L(\nu^{3/2}St_{GL_2(\Q_p)};1).$  Analogously, by choosing a arbitrary but finite set of places $p$  where we can pick   $f_p\in L(\nu^{3/2}St_{GL_2(\Q_p)};1)$ we can make  the pole for $s=-2$ of $E_{const}(s,f)$ of the arbitrary high order. After removing the poles  the local images are irreducible on the ramified choices and of length two on the unramified choices.  Thus, the image of (\ref{de-100})  spans (higly) reducible representation. Similarly, if $s=-2$ but $\chi\neq 1,$ all the normalizing factors are holomorphic and non-vanishing, and we have a pole for each place $p$ for which $\chi_p=1,$ so the  discussion about local images  is similar to the discussion for $\chi=1$--case, modulo the  number of places where we have $\chi_p=1.$ If $\chi_p\neq 1,$ the representation $\chi_p\nu_p^{-2}\rtimes 1_{SL_2(\Q_p)}$ is irreducible, (e.g., Lemma 9.4. of \cite{MuicSp4}, equally easy for the non-archimedean case) and all the intertwining operators are holomorphic isomorphisms.

If $s<-2$ all the normalizing factors are holomorphic and non-zero, and all the representatins on the finite places are irreducible, and the intertwining operators acting on those representations are holomorphic isomorphisms. Then, the images generated locally are also irreducible. We can only get poles of the intertwining operators at the archimedean place, when $s$ is odd and $\chi_{\infty}=sgn$ or when  $s$ is even and $\chi_{\infty}=1$ for all of them (attached to $s,\;c_2s,\;sc_2s$) as follows from Lemma \ref{lem:c_1arch}. In all other situation the Eisenstein series are holomorphic and (\ref{de-100}) gives the embedding in the space of automorphic forms.
In these exceptional cases, we get a pole of the first order if we pick $f_{\infty}\in L(\delta \nu^{\frac{-s+1}{2}},-s-1;1)$ in both of these cases (Theorem 11.1.(i) of \cite{MuicSp4}). By normalizing the Eisenstein series to eliminate this pole, we again get no cancelations between contributions attached to the different Weyl group elements. In the situation when we pick $f_{\infty}\in L(\delta \nu^{\frac{-s+1}{2}},-s-1;1)$ (and normalize the Eisenstien series), the image will generate an irreducible representation in $Ind_B^{Sp_4(\R)}w(\nu^s\otimes \nu{-1}),$ for $w\in \{s,c_2s\},$ but reducible for $w=sc_2s;$ indeed, for  $w=sc_2s$ $f_{\infty}$ generates the whole representation $\chi_{\infty}\nu_{\infty}^{-s}\rtimes 1_{SL_2(\R)}\hookrightarrow \nu_{\infty}^{-s}\times \nu^{-1}\rtimes 1.$ 
\end{proof}

\section{The Siegel case}\label{siegel}
We now study the degenerate Eisenstein series of the representation 
\[Ind_{P_2(\A)}^{Sp_4(\A)}(\chi\nu^s1_{GL_2(\A)}).\]
Thus $\Lambda_s=\chi\nu^{s-1/2}\otimes \chi\nu^{s+1/2},$ where $s\in \R.$

The terms in (\ref{de-3}) which appear in the Siegel case are the following:
\begin{equation}
\label{eq:W'}
W'=\{w\in W:w(e_1-e_2)>0\}=\{id,c_2,sc_2,c_2sc_2\}.
\end{equation}
The corresponding normalizing factors are 
\[r(\Lambda_s,c_2)^{-1}=\frac{L(s+\frac{1}{2},\chi)}{L(s+\frac{3}{2},\chi)\varepsilon(s+\frac{3}{2},\chi)},\]
\[r(\Lambda_s,sc_2)^{-1}=\frac{L(s+\frac{1}{2},\chi)L(2s,\chi^2)}{L(s+\frac{3}{2},\chi)\varepsilon(s+\frac{3}{2},\chi)L(2s+1,\chi^2)\varepsilon (2s+1,\chi^2)},\]
\[r(\Lambda_s,c_2sc_2)^{-1}=\frac{L(2s,\chi^2)L(s-\frac{1}{2},\chi)}{L(s+\frac{3}{2},\chi)L(2s+1,\chi^2)\varepsilon(s+\frac{3}{2},\chi)\varepsilon(s+\frac{1}{2},\chi)\varepsilon(2s+1,\chi^2)}.\]

We discuss the poles of the (inverses) of the normalizing factors above, which appear only in the following situations.
\begin{prop}
\label{prop:Siegel_normalizations}
\begin{enumerate}
\item Assume $s\ge 0.$ 
\begin{enumerate}
\item  $r(\Lambda_s,c_2)^{-1}$   has a pole of the first order for $s=\frac{1}{2}$ if $\chi=1.$ 
\item 
$r(\Lambda_s,sc_2)^{-1}$ has a pole  for $s=\frac{1}{2}$ of the second order if $\chi=1$ and of the first order if $\chi^2=1,$ but $\chi\neq 1.$
\item  for $r(\Lambda_s,c_2sc_2)^{-1}$ the same discussion as for  $r(\Lambda_s,sc_2)^{-1}.$
\end{enumerate}
\item Assume $s<0.$
\begin{enumerate}
\item $r(\Lambda_s,c_2)^{-1}$ may have a pole  for some $-\frac{3}{2}<s<-\frac{1}{2},$ where the denominator has a zero.
\item $r(\Lambda_s,sc_2)^{-1}$ may have a pole  for some $-\frac{3}{2}<s<-\frac{1}{2},$ or some $-\frac{1}{2}<s<0$ where the
denominator has a zero.
\item for $r(\Lambda_s,c_2sc_2)^{-1}$ the same discussion as for  $r(\Lambda_s,sc_2)^{-1}.$
\end{enumerate}
\end{enumerate}
\end{prop}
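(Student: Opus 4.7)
The proof is essentially an analytic computation with Hecke $L$-functions and $\varepsilon$-factors, so the plan is bookkeeping rather than insight. The inputs I will use are standard: for a unitary Hecke character $\chi$ of $\Q^\times\setminus \A^\times$, the completed global $L$-function $L(s,\chi)$ is entire unless $\chi=1$, in which case it is $\zeta(s)$ and has a simple pole at $s=1$; the global $\varepsilon$-factors $\varepsilon(s,\chi)$ are entire and nowhere zero; and a unitary Hecke $L$-function is nonvanishing on the half-plane $\mathrm{Re}(s)\ge 1$. All the work is to feed the explicit formulas for $r(\Lambda_s,c_2)^{-1}$, $r(\Lambda_s,sc_2)^{-1}$, $r(\Lambda_s,c_2sc_2)^{-1}$ into this dictionary.

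For part (1), assume $s\ge 0$. The denominator factors that appear are $L(s+\tfrac{3}{2},\chi)$ and $L(2s+1,\chi^2)$; the first lies in $\mathrm{Re}\ge \tfrac{3}{2}>1$, and the second in $\mathrm{Re}\ge 1$, so both are nonvanishing (and finite unless $\chi^2=1$ and $s=0$, in which case the denominator only becomes infinite, producing a zero and not a pole of $r^{-1}$). Therefore every pole must come from the numerator. The three possible numerator factors have poles: $L(s+\tfrac{1}{2},\chi)$ at $s=\tfrac{1}{2}$ when $\chi=1$; $L(2s,\chi^2)$ at $s=\tfrac{1}{2}$ when $\chi^2=1$; and $L(s-\tfrac{1}{2},\chi)$ at $s=\tfrac{3}{2}$ when $\chi=1$. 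Reading which factors appear in each $r(\Lambda_s,w)^{-1}$ gives (a)--(c): only the first for $c_2$; the first two for $sc_2$, producing a double pole at $s=\tfrac{1}{2}$ when $\chi=1$ and a simple pole when $\chi^2=1,\chi\ne 1$; and for $c_2sc_2$ the same two trivial or quadratic numerator factors produce the analogous picture at $s=\tfrac{1}{2}$.

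For part (2), assume $s<0$. Now each numerator factor is evaluated at a point with real part $<1$, so no numerator poles appear and the only way to produce a pole of $r(\Lambda_s,w)^{-1}$ is through a zero of a denominator $L$-factor in the critical strip. The denominator $L(s+\tfrac{3}{2},\chi)$ has its argument in $(0,1)$ exactly when $-\tfrac{3}{2}<s<-\tfrac{1}{2}$, and $L(2s+1,\chi^2)$ has its argument in $(0,1)$ exactly when $-\tfrac{1}{2}<s<0$. Matching which denominator factors appear in each of the three expressions yields (a)--(c).

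The proof is routine; there is no deep obstacle. The only point that requires care is the boundary behaviour of the denominator $L$-factors at $\mathrm{Re}=1$, which is handled by invoking the classical nonvanishing on $\mathrm{Re}(s)=1$ to rule out spurious poles of $r^{-1}$ at $s=0$, $s=-\tfrac{1}{2}$ and the boundaries of the intervals above. Everything else is direct substitution.
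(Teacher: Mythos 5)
Your overall strategy---reading the poles of $r(\Lambda_s,w)^{-1}$ directly off the displayed formulas using the analytic properties of global Hecke $L$-functions---is exactly what the paper does (the proposition is stated there without a separate proof), and your treatment of part (2) is essentially fine. The problem is your key analytic input. The $L$-functions occurring in $r(\Lambda_s,w)^{-1}$ are the \emph{completed} ones (they are products of local factors over all places, and the paper repeatedly uses the functional equation $L(1-s,\chi^{-1})=\varepsilon(s,\chi)L(s,\chi)$); for $\chi=1$ the completed zeta function has simple poles at \emph{both} $s=0$ and $s=1$, not only at $s=1$ as you assert. This is not cosmetic: compare Lemma \ref{lem:normalization}, where $r(\Lambda_s,c_1)^{-1}=L(s-1,\chi)/(\cdots)$ is recorded as having poles at $s=1$ \emph{and} $s=2$. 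With your convention the numerator $L(2s,\chi^2)L(s-\frac{1}{2},\chi)$ of $r(\Lambda_s,c_2sc_2)^{-1}$ contributes, at $s=\frac{1}{2}$ and $\chi=1$, only the single pole of $L(2s,1)$ at $2s=1$, so you would obtain a first-order pole there, whereas part (1)(c) asserts a second-order pole; the second factor $L(s-\frac{1}{2},1)$ blows up at $s=\frac{1}{2}$ precisely because of the pole at argument $0$. So as written your argument does not establish (1)(c).

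There is a second, related loose end. Your own list of numerator poles includes $L(s-\frac{1}{2},\chi)$ at $s=\frac{3}{2}$ for $\chi=1$; this factor sits in the numerator of $r(\Lambda_s,c_2sc_2)^{-1}$ and nothing in its denominator (which at $s=\frac{3}{2}$ is $L(3,1)L(4,1)$ times nonvanishing $\varepsilon$-factors) can cancel it, so by your own bookkeeping $r(\Lambda_s,c_2sc_2)^{-1}$ acquires a first-order pole at $s=\frac{3}{2}$ when $\chi=1$. You nevertheless conclude that $c_2sc_2$ produces only ``the analogous picture at $s=\frac{1}{2}$.'' You must either derive this extra pole explicitly or explain why it is absent---and it does not appear to be absent, so simply matching the stated conclusion is not a proof; at the very least the discrepancy has to be confronted. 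Finally, a small inaccuracy in the same vein: at $s=0$ with $\chi^2=1$ the numerator factor $L(2s,\chi^2)$ and the denominator factor $L(2s+1,\chi^2)$ both have simple poles, so $r(\Lambda_s,sc_2)^{-1}$ is finite and nonzero there rather than vanishing as you claim; this does not affect the statement about poles, but it is another instance of the argument-$0$ pole being overlooked.
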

Now we discuss the local normalized intertwining operators corresponding to the elements of $W'$ (\ref{eq:W'}). The following lemma follows straightforward from the $GL_2$ and $SL_2$ cases.
\begin{lem}
\label{lem:operatorsSiegel}
\begin{enumerate}
\item Assume $p<\infty.$ Then, the operator $\cal N(\Lambda_{s, p}, \wit{c_2})$ is holomorpic unless $s=-\frac{3}{2}$ and $\chi_p=1.$ Assume $p=\infty.$ Then, the operator $\cal N(\Lambda_{s, \infty}, \wit{c_2})$ is holomorpic unless $s+\frac{1}{2}$ is a negative odd integer and $\chi_{\infty}=1$ and  $s+\frac{1}{2}$ is a negative even  integer and $\chi_{\infty}=sgn.$

\item In addition to the poles from the case of  $\cal N(\Lambda_{s, p}, \wit{c_2})$, the operator  $\cal N(\Lambda_{s, p}, \wit{sc_2})$ may have additional poles as follows: if $p<\infty$ then the pole might appear if $\chi_p^2=1$ and $s=-\frac{1}{2}.$ If $p=\infty$, then the pole appears if $2s$ is a negative odd integer, and $\chi_{\infty}=sgn^l,\;l\in \{0,1\}.$
\item In addition to the poles from the case of  $\cal N(\Lambda_{s, p}, \wit{sc_2})$, the operator  $\cal N(\Lambda_{s, p}, \wit{c_2sc_2})$ may have additional poles as follows: if $p<\infty$ and $s=-\frac{1}{2}$ and $\chi_p=1.$ Assume $p=\infty.$
Then we may have a pole if   $s-\frac{1}{2}$  a negative odd integer and  $\chi_{\infty}=1$ and  $s-\frac{1}{2}$ is a negative even  integer and $\chi_{\infty}=sgn.$
\end{enumerate}
\end{lem}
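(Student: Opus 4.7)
The plan is to factor each operator through simple reflections and reduce the pole analysis to rank-one operators for $SL_2(\Q_p)$ and $GL_2(\Q_p)$. Using the reduced expressions $c_2$, $sc_2 = s\cdot c_2$, and $c_2 s c_2 = c_2 \cdot s \cdot c_2$, multiplicativity of normalized intertwining operators yields the factorizations
\[
\cal N(\Lambda_{s,p}, \wit{sc_2}) = \cal N(c_2(\Lambda_{s,p}), \wit{s})\, \cal N(\Lambda_{s,p}, \wit{c_2}),
\]
\[
\cal N(\Lambda_{s,p}, \wit{c_2sc_2}) = \cal N(sc_2(\Lambda_{s,p}), \wit{c_2})\, \cal N(c_2(\Lambda_{s,p}), \wit{s})\, \cal N(\Lambda_{s,p}, \wit{c_2}).
\]
Since $\Lambda_s = \chi\nu^{s-1/2} \otimes \chi\nu^{s+1/2}$, one computes $c_2(\Lambda_s) = \chi\nu^{s-1/2}\otimes \chi^{-1}\nu^{-(s+1/2)}$ and $sc_2(\Lambda_s) = \chi^{-1}\nu^{-(s+1/2)}\otimes \chi\nu^{s-1/2}$. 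By induction in stages, each rank-one factor is induced from an intertwining operator for $SL_2(\Q_p)$ (when the simple reflection is $c_2$) or for $GL_2(\Q_p)$ (when it is $s$), so each factor's poles are governed by the classical rank-one theory.

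For part (1), the operator $\cal N(\Lambda_{s,p}, \wit{c_2})$ is the $SL_2(\Q_p)$-operator on the principal series attached to the character $\chi_p \nu_p^{s+1/2}$; by Lemma \ref{norm-pol} it is holomorphic unless $\mathrm{Re}(s+1/2)<0$ and $\chi_p\nu_p^{s+1/2}\rtimes 1$ is reducible, which for $p<\infty$ forces $\chi_p=1,\,s=-\tfrac{3}{2}$, and for $p=\infty$ gives the stated parity conditions on $s+\tfrac12$. For part (2), the extra factor $\cal N(c_2(\Lambda_{s,p}), \wit{s})$ is the $GL_2(\Q_p)$-operator on $\chi_p\nu_p^{s-1/2}\times \chi_p^{-1}\nu_p^{-(s+1/2)}$; the quotient of the two characters equals $\chi_p^2\nu_p^{2s}$, so the $GL_2$ analog of Lemma \ref{norm-pol} requires $\mathrm{Re}(2s)<0$ together with reducibility of this $GL_2$ principal series. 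For $p<\infty$ this gives $\chi_p^2=1$ and $s=-\tfrac12$, and for $p=\infty$ (where $\chi_\infty^2$ is trivial) the locus reduces to $2s\in \Z_{<0}$ of the appropriate parity, matching $\chi_\infty=\mathrm{sgn}^l$ for $l\in\{0,1\}$. For part (3), the final extra factor $\cal N(sc_2(\Lambda_{s,p}), \wit{c_2})$ acts on the second coordinate of $sc_2(\Lambda_s)$, i.e., it is an $SL_2(\Q_p)$-operator on $\chi_p\nu_p^{s-1/2}\rtimes 1$; the same analysis as in part (1), with $s+\tfrac12$ replaced by $s-\tfrac12$, produces the listed conditions.

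The only bookkeeping I would need to watch carefully is the identification, at each stage of the composition, of the precise $SL_2$- or $GL_2$-character on which the rank-one factor acts after pre-composing with the earlier operators; once that is done, each case is an immediate invocation of Lemma \ref{norm-pol} or its $GL_2$ analog, which is why the lemma is advertised as following directly from these two cases. Note that the statement only asserts the \emph{possible} pole locations, so we do not need to verify that a pole actually occurs or compute its order; those refinements, together with the interplay with the normalization factors from \eqref{de-5}, will be the substantive work in the subsequent Siegel theorems.
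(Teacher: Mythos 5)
Your factorizations through the reduced words $c_2$, $s\cdot c_2$, $c_2\cdot s\cdot c_2$, the computation of $c_2(\Lambda_s)$ and $sc_2(\Lambda_s)$, and the reduction of each rank-one factor to Lemma \ref{norm-pol} (and its $GL_2$ analogue) are all correct and match the normalizing factors $r(\Lambda_s,w)^{-1}$ listed in Section \ref{siegel}, so the stated pole loci come out right. This is exactly the argument the paper has in mind — it simply asserts that the lemma "follows straightforward from the $GL_2$ and $SL_2$ cases" — so your write-up is the same approach, just made explicit.
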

\begin{thm}
\label{thm:image_Siegel}
Assume $s\ge 0.$ Then all the local intertwining operators correspoding to $w\in W'$ are holomorphic.
\begin{enumerate}
\item  If $s\neq \frac{1}{2},$ or $\chi^2\neq 1,$ then the Eisenstein series are holomorphic for every choice of $f=\otimes f_p,$ and (\ref{de-100}) gives and holomorphic embedding of   $Ind_{P_2(\A)}^{Sp_4(\A)}(\chi\nu^s1_{GL_2(\A)})$ in the space of automorphic forms.
\item  If $s=\frac{1}{2}$ and $\chi=1,$ $E_ {const}(s,f)$ has a pole of the first order, and then the image of (\ref{de-100}) is the unique spherical subquotient of $Ind_{P_2(\A)}^{Sp_4(\A)}(\chi\nu^{\frac{1}{2}}1_{GL_2(\A)}).$
\item  If $s=\frac{1}{2}$ and $\chi\neq 1,$ then for all $p\le \infty$ such that $\chi_p\neq 1$, let  $\chi_p\nu_p^0\rtimes 1=T_1\oplus T_2$ (of course, then $T_1$ and $T_2$ depend on $p$). Then, in the appropriate Grothendieck group we have
\[\nu_p^{1/2}\chi_p1_{GL_2(\Q_p)}\rtimes 1=L(\chi_p\nu^1;T_1)+L(\chi_p\nu^1;T_2)+L(\chi_p\nu_p^{1/2}St_{GL_2(\Q_p)};1).\]
Here $T_2$ is the representation on which $SL_2(\Q_p)$ operator on   $\chi_p\nu_p^0\rtimes 1$ attains value minus identity. Now, we pick $f_p$ from the subquotient $L(\chi_p\nu^1;T_2)$ on the set of places $S'.$ If $|S'|$ is even, then $E_{const}(\frac{1}{2},\cdot)$ has a pole of the first order, and (\ref{de-100}) gives, in the space of automorphic forms, a realization of the global representation with $L(\chi_p\nu^1;T_2)$ on the even number of places, and $L(\chi_p\nu^1;T_1)$ on the rest of the places.
 If $|S'|$ is odd, then  $E_{const}(\frac{1}{2},\cdot)$ is holomorphic and (\ref{de-100}) gives an embedding of the whole global representation 
$Ind_{P_2(\A)}^{Sp_4(\A)}(\chi\nu^{\frac{1}{2}}_{GL_2(\A)})$ in the space of automorphic forms.
\end{enumerate}
\end{thm}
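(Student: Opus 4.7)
My plan is to invoke the constant-term formula (\ref{de-3}) with $W'=\{id,c_2,sc_2,c_2sc_2\}$ from (\ref{eq:W'}). Since $s\ge 0$, Lemma \ref{lem:operatorsSiegel} ensures every local normalized operator $\cal N(\Lambda_{s,p},\wit{w})$ in the sum is holomorphic, so all poles of $E_{const}(s,f)$ come from the global normalizing factors. By Proposition \ref{prop:Siegel_normalizations} these are holomorphic outside $s=1/2$ with $\chi^2=1$, which splits the argument into the three cases of the statement.

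For part (1), when $s\neq 1/2$ or $\chi^2\neq 1$, every $r(\Lambda_s,w)^{-1}$ is holomorphic, so $E(s,f)$ is holomorphic. Computing the Weyl action on characters shows $w(\Lambda_s)\neq\Lambda_s$ for each $w\in W'\setminus\{id\}$ in this range, with the single exception $c_2sc_2(\Lambda_s)=\Lambda_s$ at $s=0,\chi^2=1$. In the generic subcase the four contributions lie in pairwise distinct principal series, so the identity contribution is not cancelled and (\ref{de-100}) is the desired embedding; in the exceptional subcase one performs the additional sign analysis on the self-intertwiner $M(\Lambda_s,c_2sc_2)$ (analogous to the discussion in Theorem \ref{thm-heisenberg}(4)) and combines it with the injectivity of the constant-term map recalled after (\ref{de-5000000000}).

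For part (2), at $\chi=1,s=1/2$, since $\chi^2=1$ we have $sc_2(\Lambda_s)=c_2sc_2(\Lambda_s)$, so the $sc_2$ and $c_2sc_2$ contributions land in a common principal series. The factor $r(\Lambda_s,c_2)^{-1}$ carries a simple pole while $r(\Lambda_s,sc_2)^{-1}$ and $r(\Lambda_s,c_2sc_2)^{-1}$ each carry a double pole built from $L(1,1)$ together with $L(1,1)$ or $L(0,1)$. The key claim is that the leading Laurent coefficients of these two double-pole contributions cancel, leaving only a simple pole for $E_{const}$. The cancellation combines the factorization $\cal N(\Lambda_{s,p},\wit{c_2sc_2})=\cal N(sc_2(\Lambda_{s,p}),\wit{c_2})\cal N(\Lambda_{s,p},\wit{sc_2})$ with the functional equation $L(1-s,1)=\varepsilon(s,1)L(s,1)$; the residues $\mathrm{Res}_{s=1}L(s,1)$ and $\mathrm{Res}_{s=0}L(s,1)$ are related by opposite signs (since $\xi(s)=\xi(1-s)$), while the self-intertwiner $\cal N(sc_2(\Lambda_{s,p}),\wit{c_2})$ acts as $+Id$ on the spherical component of the common target. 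These together produce the cancellation of the order-two terms. The residue at $s=1/2$ is then identified, place by place from the composition series in \cite{STSp4,MuicSp4}, with the unique spherical subquotient of $\Ind_{P_2(\A)}^{Sp_4(\A)}(\nu^{1/2}1_{GL_2(\A)})$.

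For part (3), at $s=1/2$ with $\chi^2=1,\chi\neq 1$, only $L(2s,\chi^2)=L(1,1)$ is singular, giving simple poles in $r(\Lambda_s,sc_2)^{-1}$ and $r(\Lambda_s,c_2sc_2)^{-1}$ but regularity of $r(\Lambda_s,c_2)^{-1}$; as in part (2) the two singular terms live in the same principal series. The argument mirrors Theorem \ref{thm-heisenberg}(4): decompose $\chi_p\rtimes 1=T_1\oplus T_2$ into tempered summands and read off the Langlands decomposition
$\chi_p\nu^{1/2}1_{GL_2(\Q_p)}\rtimes 1=L(\chi_p\nu;T_1)\oplus L(\chi_p\nu;T_2)\oplus L(\chi_p\nu^{1/2}St_{GL_2(\Q_p)};1)$
from \cite{STSp4,MuicSp4}. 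The operator $\cal N(s(\Lambda_{s,p}),\wit{c_2})$ acts as $\pm Id$ on $\nu_p^{-1}\rtimes T_i$, so selecting $f_p\in L(\chi_p\nu;T_2)$ at the $|S'|$ prescribed places introduces a relative sign $(-1)^{|S'|}$ between the $c_2sc_2$ and $sc_2$ contributions. Using $L(1-s,\chi^{-1})=\varepsilon(s,\chi)L(s,\chi)$ and $\varepsilon(s,\chi)\varepsilon(1-s,\chi^{-1})=1$ at $s=1/2$, the combined residue is proportional to $1+(-1)^{|S'|}$: a simple pole with the specified image for even $|S'|$, and a holomorphic embedding of the whole induced representation for odd $|S'|$. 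The main obstacle I expect is the order reduction in part (2), which requires a precise global residue computation tying together the two Weyl contributions via the functional equation, rather than the simpler single-contribution cancellations handled in the Heisenberg case.
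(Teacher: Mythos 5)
Your proposal is correct and follows essentially the same route as the paper: holomorphy of all local operators for $s\ge 0$, poles traced to the normalizing factors at $s=\tfrac12$ with $\chi^2=1$, merging of the $sc_2$ and $c_2sc_2$ contributions into a common target (for $\chi_p=1$ the paper justifies this by noting the induced representation is generated by its spherical vector, so the two local operators agreeing there agree everywhere), the functional-equation cancellation $L(-t,1)+L(t,1)$ reducing the double pole to a simple one in case (2), and the $1+(-1)^{|S'|}$ sign count in case (3). One small imprecision: for $\chi_p^2=1$, $\chi_p\neq 1$ the length-three representation $\chi_p\nu_p^{1/2}1_{GL_2(\Q_p)}\rtimes 1$ is not a direct sum of its three constituents --- $L(\chi_p\nu_p^{1/2}St_{GL_2(\Q_p)};1)$ is the socle and the two Langlands quotients form the cosocle --- but this does not affect your argument.
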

\begin{proof}
The claims for $s\neq \frac{1}{2}$ or $\chi^2\neq 1$ are obvious. Now assume that $\chi^2=1$  and $s=\frac{1}{2}.$ 
Since for every $p\le \infty$ $\chi_p^2=1,$ we have that  $\cal N(\Lambda_{s, p}, \wit{sc_2})$ and $\cal N(\Lambda_{s, p}, \wit{c_2sc_2})$ have the same codomain. Assume now $p<\infty.$ If $\chi_p=1$ then $\nu_p^{1/2}1_{GL_2(\Q_p)}\rtimes 1$ has the unique irreducible quotient, namely $L(\nu_p^1;\nu_p^0\rtimes1)$ (which is spherical) (cf. \cite{STSp4} Proposition 5.4.(ii)) and a tempered subrepresentation, say $T_2.$ Then, the spherical vector generates the whole representation  $\nu_p^{1/2}1_{GL_2(\Q_p)}\rtimes 1,$ and since   $\cal N(\Lambda_{s, p}, \wit{sc_2})$ and  $\cal N(\Lambda_{s, p}, \wit{c_2sc_2})$ agree on the spherical vector (and are holomorphic and non-zero on that vector), they agree on the whole representation $\nu_p^{1/2}1_{GL_2(\Q_p)}\rtimes 1.$ The image of this spherical vector by $\cal N(\Lambda_{s, p}, \wit{sc_2})$ (or by $\cal N(\Lambda_{s, p}, \wit{c_2sc_2}))$ generates in $\nu_p^{-1}\times \nu_p^0\rtimes$ an irreducible subspace (thus isomorphic to $L(\nu_p^1;\nu_p^0\rtimes1).$  If $\chi_p\neq 1,$ then $\nu_p^{1/2}\chi_p1_{GL_2(\Q_p)}\rtimes 1$ is of length 3, and in the appropriate Grothendieck group we have 
\[\nu_p^{1/2}\chi_p1_{GL_2(\Q_p)}\rtimes 1=L(\chi_p\nu^1;T_1)+L(\chi_p\nu^1;T_2)+L(\chi_p\nu_p^{1/2}St_{GL_2(\Q_p)};1).\]
Here $\chi_p\nu_p^0\rtimes 1=T_1\oplus T_2$ and  $L(\chi_p\nu_p^{1/2}St_{GL_2(\Q_p)};1)$ is an irreducible subrepresentation of $\nu_p^{1/2}\chi_p1_{GL_2(\Q_p)}\rtimes 1,$ moreover,  we have an epimorphism $\nu_p^{1/2}\chi_p1_{GL_2(\Q_p)}\rtimes 1 \to L(\chi_p\nu^1;T_1)\oplus L(\chi_p\nu^1;T_2)$ (with the kernel  $L(\chi_p\nu_p^{1/2}St_{GL_2(\Q_p)};1)).$ Since  $\cal N(\Lambda_{s, p}, \wit{sc_2})$ maps $\nu_p^{1/2}\chi_p1_{GL_2(\Q_p)}\rtimes 1$ to the represenation $\chi_p\nu_p^{-1}\times \nu_p^0\chi_p\rtimes 1,$ which has two unique subrepresentations $L(\chi_p\nu^1;T_1)$ and  $L(\chi_p\nu^1;T_2),$ we see that $L(\chi_p\nu_p^{1/2}St_{GL_2(\Q_p)};1)$ is in the kernel of the operator  $\cal N(\Lambda_{s, p}, \wit{sc_2})$ (restricted on $\nu_p^{1/2}\chi_p1_{GL_2(\Q_p)}\rtimes 1$) and it is easy to see that   
\[\cal N(\Lambda_{s, p}, \wit{sc_2})(\nu_p^{1/2}\chi_p1_{GL_2(\Q_p)}\rtimes 1)=L(\chi_p\nu^1;T_1)\oplus L(\chi_p\nu^1;T_2).\]  On the other hand
\[\cal N(\Lambda_{s, p}, \wit{c_2sc_2})= \cal N(sc_2(\Lambda_{s, p}), \wit{c_2})\cal N(\Lambda_{s, p}, \wit{sc_2}),\]
 and 
$\chi_p\nu_p^{-1}\times \chi_p\nu_p^{0} \rtimes 1=\chi_p\nu_p^{-1}\rtimes T_1 \oplus \chi_p\nu_p^{-1}\rtimes T_2,$ and $\cal N(sc_2(\Lambda_{s, p}), \wit{c_2})$ acts on one of these two summands as identity and on the other as minus identity, say on the first one as identity. 
Assume now $p=\infty.$ If $\chi_{\infty}=1$ then again as in the non-archimedean case, the operators  $\cal N(\Lambda_{s, p}, \wit{sc_2})$ and $\cal N(\Lambda_{s, p}, \wit{c_2sc_2})$ have the same action on $\nu_{\infty}^{1/2}\chi_{\infty}1_{GL_2(\R)}\rtimes 1$ (which is again generated by the spherical vector), and the image is isomorphic to  $L(\nu_{\infty}^1;\nu_{\infty}^0\rtimes1).$ If $\chi_{\infty}=sgn$ we have the same situation as in the non-archimdedan case; cf. Theorem 11.2 of \cite{MuicSp4}. 

Now assume that $\chi=1.$ Then $E_{const}(1/2,\cdot)$ has a pole of the first order (due to the poles of the global normalizing factors) and (normalized) (\ref{de-3}) becomes
\begin{gather}
\label{eq1}
\lim_{s\to \frac{1}{2}}(s-\frac{1}{2})(r(\Lambda_s,sc_2)^{-1}+r(\Lambda_s,c_2sc_2)^{-1})\otimes_{p\in S}\cal N(\Lambda_{s, p}, \wit{sc_2})f_{p,s} \otimes \otimes_{p\notin S} f_{p,sc_2(s)}+\\
 \lim_{s\to \frac{1}{2}}(s-\frac{1}{2})(r(\Lambda_s,c_2)^{-1}\otimes_{p\in S}\cal N(\Lambda_{s, p}, \wit{sc_2})f_{p,s} \otimes \otimes_{p\notin S} f_{p,sc_2(s)}.
\notag
\end{gather}
Note that for $\cal N(\Lambda_{s, p}, \wit{c_2}),$ the image is again generated by the spherical vector, thus for $\cal N(\Lambda_{s, p}, \wit{c_2}),$ all other subquotients of $\nu_{p}^{1/2}\chi_{p}1_{GL_2(\Q_p)}\rtimes 1,\,p\le \infty$ are in the kernel.
Because $\chi=1,$ all the $\varepsilon$ factors are trivial, and $\lim_{s\to \frac{1}{2}}(s-\frac{1}{2})(r(\Lambda_s,sc_2)^{-1}+r(\Lambda_s,c_2sc_2)^{-1})$ becomes $\lim_{s\to \frac{1}{2}}(s-\frac{1}{2})\frac{L(2s,1)}{L(2,1)^2}\lim_{t\to 0}(L(-t,1)+L(t,1)).$
In the equation (\ref{eq1}) the contribution is zero if we pick a vector $f_p$ which does not belong to the spherical quotient.

Assume $\chi\neq 1,$ but $\chi^2=1.$ Assume that we have picked $S'\subset S$ such that for every $p\in S'$ $\chi_p\neq 1$ and  $f_p$  belongs to the subquotient $L(\chi_p\nu^1;T_2).$
We denote $A(s)=\frac{L(2s,\chi^2)}{L(s+\frac{3}{2},\chi)\varepsilon(s+\frac{3}{2},\chi)L(2s+1,\chi^2)\varepsilon (2s+1,\chi^2)}.$ Then $r(\Lambda_s,sc_2)^{-1}=A(s)L(s+\frac{1}{2},\chi)$ and $r(\Lambda_s,c_2sc_2)^{-1}=A(s)\frac{L(s-\frac{1}{2},\chi)}{\varepsilon (s+\frac{1}{2},\chi)}.$ Then, it is easy to see that the contributions to $E_{const}$ comming from $sc_2$ and $c_2sc_2$ together give:

\[\frac{A(s)(L(\frac{1}{2}-s,\chi)+L(s-\frac{1}{2},\chi)(-1)^{|S'|})}{\varepsilon(s+\frac{1}{2},\chi)}\otimes _{p\in S}\cal N(\Lambda_{s, p}, \wit{sc_2})f_{p,s} \otimes \otimes_{p\notin S} f_{p,sc_2(s)}.\]
If $|S'|$ is odd, since $A(s)$ has a pole for $s=\frac{1}{2},$ the expression 
\[A(s)(L(\frac{1}{2}-s,\chi)+L(s-\frac{1}{2},\chi)(-1)^{|S'|})\]
 is then holomorphic and non-zero, and if $|S'|$ is even it has a pole. Note that for $\chi_p\neq 1,$ the operator $\cal N(\Lambda_{s, p}, \wit{c_2})$ is a holomorphic isomorphism on $\nu_p^{1/2}\chi_p1_{GL_2(\Q_p)}\rtimes 1.$

We conclude: if $|S'|$ is even, then $E_{const}(\frac{1}{2},\cdot)$ has a pole of the first order, and (\ref{de-100}) gives, in the space of automorphic forms, a realization of the global representation with $L(\chi_p\nu^1;T_2)$ on the even number of places, and $L(\chi_p\nu^1;T_1)$ on the rest of the places.
Note that if $|S'|$ is odd, then  $E_{const}(\frac{1}{2},\cdot)$ is holomorphic and (\ref{de-100}) gives an embedding of the whole global representation 
$Ind_{P_2(\A)}^{Sp_4(\A)}(\chi\nu^{\frac{1}{2}}_{GL_2(\A)})$ in the space of automorphic forms.
\end{proof}

\begin{thm}
\label{thm:image_Siegel_s<0}
Assume now that $s<0.$ Then we have the following:
\begin{enumerate}
\item Assume $-\frac{1}{2}<s<0.$ Then either the Eisenstein series are holomorphic or they have a possible pole of the first order due to the pole of $r(\Lambda_s,sc_2)^{-1}$ and  $r(\Lambda_s,c_2sc_2)^{-1}$ coming from the zero of  $L(2s+1,\chi^2)$ for given $s;$  in both cases the intertwining operators are holomorphic isomorphisms so that  (\ref{de-100}) gives an embedding of the whole global representation 
$Ind_{P_2(\A)}^{Sp_4(\A)}(\chi\nu^{s}_{GL_2(\A)})$ in the space of automorphic forms.
\item Assume $s=-\frac{1}{2}.$ Then, If $\chi_p^2\neq 1$ then all the local intertwining operators are holomorphic isomorphisms(for all $p\le \infty$) and the image is isomorphic to $\chi_p\nu_p^{-1/2}1_{GL_2(\Q_p)}\rtimes 1.$  If $\chi_p^2=1$ but $\chi_p\neq 1$ all the intertwinig operators are homomorphisms. If $\chi_p=1$  the local intertwining operator  $\cal N(\Lambda_{s, p}, \wit{c_2sc_2})$ where $p\le \infty$ can have a  pole for for every $p$  with the choice of $f_p$ from the non-spherical subqotient. This gives us a pole of the Eisenstein series (for  $\chi$ which allows this possibility, e.g., $\chi=1$) of every possible order. In this case, the image (by (\ref{de-100}) after the normalization) consists of the irreducible tempered representations (on these ``problematic'' places where $\chi_p=1$ and a non-spherical (tempered) choice of a subquotient and of the representation generated by the spherical subquotient on the rest of the places (and the spherical subquotient generates the representation $\nu_p^{1/2}1_{GL_2(\Q_p)}\rtimes 1$).
\item Assume $-\frac{3}{2}<s<-\frac{1}{2}.$  Then either the Eisenstein series are holomorphic or they have a possible pole of the first order due to the pole of  $r(\Lambda_s,c_2)^{-1},\;r(\Lambda_s,sc_2)^{-1}$ and  $r(\Lambda_s,c_2sc_2)^{-1}$ coming from the zero of  $L(2s+\frac{3}{2},\chi)$ for given $s;$  in both cases the intertwining operators are holomorphic isomorphisms so that  (\ref{de-100}) gives an embedding of the whole global representation 
$Ind_{P_2(\A)}^{Sp_4(\A)}(\chi\nu^{s}_{GL_2(\A)})$ in the space of automorphic forms.

\item Assume $s=-\frac{3}{2}.$ Then, If $\chi_p^2\neq 1$ then all the local intertwining operators are holomorphic isomorphisms(for all $p\le \infty$) and the image is isomorphic to $\chi_p\nu_p^{-3/2}1_{GL_2(\Q_p)}\rtimes 1.$  Assume $p<\infty.$  If $\chi_p=1$  all the nontrivial  local intertwining operators  have a  pole for for every $p$  with the choice of $f_p$ from the non-spherical subqotient. On the spherical subrepresentation of $\chi_p\nu_p^{-3/2}1_{GL_2(\Q_p)}\rtimes 1$ (in this case just the trivial representation) all these operators are holomorphic, and the image generated in that way is isomorphic to the representation $\nu_p^{3/2}1_{GL_2(\Q_p)}\rtimes 1$ (thus reducible). For the choice of $f_p$ which does not belong to the spherical subrepresentation, all these operators have  a pole, and, after removing the pole by normalization, the image spans an irreducible subrepresentation $L(\nu_p^2;St_{SL_2(\Q_p)}).$  If $\chi_p\neq 1$ all the local intertwining operators are isomorphisms. At the archimedean place, all the choices of $f_p$ different from $L(\nu_{\infty}^2, \nu_{\infty}^1;1)$ if $\chi_{\infty}=1$ or $L(sgn\nu_{\infty}^2, sgn\nu_{\infty}^1;1)$ if $\chi_{\infty}=sgn$ give us poles of the intertwining operatprs.  This gives us a pole of the Eisenstein series (for  $\chi$ which allows this possibility, e.g., $\chi=1$) of every possible order.

\item Assume $s<-\frac{3}{2}.$ Then, if $\chi_{\infty}^2\neq 1$ or $s\pm \frac{1}{2}$ is not an integer, the Eisenstein series is holomorphic and  (\ref{de-100}) gives an embedding of the whole global representation 
$Ind_{P_2(\A)}^{Sp_4(\A)}(\chi\nu^{s}_{GL_2(\A)})$ in the space of automorphic forms. Assume that $\chi_{\infty}^2= 1$ and $s\pm \frac{1}{2}$ is an integer. Then, the Eisenstein series is holomorphic on $\otimes f_p,$ if $f_{\infty}$ belongs to the Langlands quotient $L(\chi_{\infty}\nu_{\infty}^{-(s-\frac{1}{2})},\chi_{\infty} \nu_{\infty}^{-(s+\frac{1}{2})};1)$ (which is a subrepresentation of  $\chi_{\infty}\nu^{s}1_{GL_2(\R)}\rtimes 1$). In this case, (\ref{de-100}) gives an embedding of an irreducible global representation $\otimes_{p<\infty} (\chi_{p}\nu^{s}1_{GL_2(\Q_p)}\rtimes 1)\otimes L(\chi_{\infty}\nu_{\infty}^{-(s-\frac{1}{2})},\chi_{\infty} \nu_{\infty}^{-(s+\frac{1}{2})};1)$ in the space of automorphic forms. If $f_{\infty}$ is not chosen in that way, the Eisenstein series have a pole of the first order coming from the pole of the archimedean intertwining operators (in the constant term). After the normalization, the image of (\ref{de-100}) in that case is a realization of a representation which has, on the archimedean place,  the unique maximal proper subrepresentation  of    $\nu_{\infty}^{-s}1_{GL_2(\R)}\rtimes 1.$
\end{enumerate}
\end{thm}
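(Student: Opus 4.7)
The plan is to mimic the case analysis in Theorem~\ref{thm:image_Siegel} and, especially, the negative-$s$ analysis of Theorem~\ref{thm-heisenberg-1}. I would start from the constant term formula (\ref{de-3}) summed over $W'=\{1,c_2,sc_2,c_2sc_2\}$, record in each case where poles of the normalizing factors $r(\Lambda_s,w)^{-1}$ can arise (Proposition~\ref{prop:Siegel_normalizations}) and where the local normalized operators $\cal N(\Lambda_{s,p},\wit{w})$ can have poles (Lemma~\ref{lem:operatorsSiegel}). The identity contribution $w=1$ is always holomorphic and nontrivial, and whenever the remaining $w$'s contribute only holomorphic or strictly subleading terms the identity term guarantees that (\ref{de-100}) gives an embedding of the global induced representation.

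For the open intervals $-\tfrac{1}{2}<s<0$ (case (1)) and $-\tfrac{3}{2}<s<-\tfrac{1}{2}$ (case (3)) Lemma~\ref{lem:operatorsSiegel} yields holomorphic isomorphisms at every local place, so the only possible Eisenstein series pole comes from a zero of $L(2s+1,\chi^2)$ or $L(s+\tfrac{3}{2},\chi)$ respectively in the denominator of the normalizing factors; after multiplying by $(s-s_0)$ one gets a holomorphic map, nontrivial thanks to the identity term. For the critical points $s=-\tfrac{1}{2}$ (case (2)) and $s=-\tfrac{3}{2}$ (case (4)) I would split into subcases according to $\chi_p$. When $\chi_p^2\ne 1$ all local pieces are isomorphisms and one gets an embedding with local components $\chi_p\nu_p^s 1_{GL_2(\Q_p)}\rtimes 1$. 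When $\chi_p=1$, the reducibility structure of $\nu_p^{-1/2}1_{GL_2(\Q_p)}\rtimes 1$ (respectively $\nu_p^{-3/2}1_{GL_2(\Q_p)}\rtimes 1$) and the fact that $\cal N(\Lambda_{s,p},\wit{c_2sc_2})$ (respectively all three nontrivial operators) acquires a pole exactly on the non-spherical subquotient allow me to produce Eisenstein series poles of arbitrary order by inserting such vectors at a chosen finite set of places $S$, giving a pole of order $|S|$ (minus one, in the cases where a cancellation with the identity happens). After normalization, the image on a problematic place is the irreducible tempered quotient (in case (2)) or the Langlands quotient $L(\nu_p^2;St_{SL_2(\Q_p)})$ (in case (4)), read off from the Langlands data of the target principal series; on the remaining places the spherical contribution controls the image (the spherical subquotient in (2), and the full $\nu_p^{3/2}1_{GL_2(\Q_p)}\rtimes 1$ in (4) since the trivial character is the unique spherical subrepresentation there).

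Finally, for $s<-\tfrac{3}{2}$ (case (5)), Proposition~\ref{prop:Siegel_normalizations} and Lemma~\ref{lem:operatorsSiegel} show that the normalizing factors and all non-archimedean local operators are holomorphic and nonzero, so the sole source of a pole is the archimedean intertwining operators, and that only when $\chi_\infty^2=1$ and $s\pm\tfrac{1}{2}\in\Z$. If $f_\infty$ lies in the Langlands subrepresentation $L(\chi_\infty\nu_\infty^{-(s-1/2)},\chi_\infty\nu_\infty^{-(s+1/2)};1)$ of $\chi_\infty\nu^s 1_{GL_2(\R)}\rtimes 1$, then Lemma~\ref{lem:operatorsSiegel} combined with the standard spherical/Langlands-quotient argument (as in the proof of Lemma~\ref{lem:c_1arch}) shows all archimedean operators are holomorphic on it, so the Eisenstein series is holomorphic and (\ref{de-100}) embeds the resulting irreducible global representation; otherwise a first-order pole is picked up and, after normalization, the archimedean image is identified as the unique maximal proper subrepresentation of $\nu_\infty^{-s}1_{GL_2(\R)}\rtimes 1$ by matching Langlands parameters across the three operators. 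The main obstacle I expect is the precise identification of the local images at the ``exceptional'' places in cases (2), (4) and the archimedean piece of (5): one must track the internal composition series of $\chi_p\nu_p^s 1_{GL_2(\Q_p)}\rtimes 1$ carefully using \cite{STSp4} at finite places and \cite{MuicSp4} archimedeanly, and combine that with the commuting-algebra / functional-equation cancellation that already appears in the proofs of Theorem~\ref{thm-heisenberg} and Theorem~\ref{thm:image_Siegel}.
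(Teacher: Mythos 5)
Your proposal follows essentially the same route as the paper: the constant term formula over $W'$, Proposition \ref{prop:Siegel_normalizations} and Lemma \ref{lem:operatorsSiegel} to locate all possible poles, and then a place-by-place analysis of the composition series of $\chi_p\nu_p^s 1_{GL_2(\Q_p)}\rtimes 1$ via \cite{STSp4} and \cite{MuicSp4} to identify the exceptional subquotients and the local images, exactly as in the paper's proof. The only small imprecision is in case (2), where on the non-problematic places the image is the full (length-two) representation $\nu_p^{1/2}1_{GL_2(\Q_p)}\rtimes 1$ generated by the image of the spherical subquotient, not the spherical subquotient alone; and the pole order at $s=-\tfrac{1}{2}$ is exactly $|S|$ with no loss of one, since only the $c_2sc_2$ term carries the local poles and nothing cancels against the identity contribution.
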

\begin{proof}
  The case of $-\frac{1}{2}<s<0$ is obvious. Assume $s=-\frac{1}{2}.$ Then, all the inverses of the global normalizing factors are holomorphic. For $p\le \infty,$ all the intertwining operators are holomorphic if $\chi_p^2\neq 1.$ Now assume $p<\infty.$ Assume that $\chi_p=1.$ From the discussion in Theorem \ref{thm:image_Siegel}, we know that we have
\[L(\nu_p^1;\nu_p^0\rtimes 1)\hookrightarrow \nu^{-1/2}1_{GL_2(\Q_p)}\rtimes 1 \to T_2,\]
where $T_2$ is a tempered representation, and $L(\nu_p^1;\nu_p^0\rtimes 1)$ the spherical subquotient. The operator $\cal N(\Lambda_{s, p}, \wit{c_2})$ is a holomorphic isomorphism on $\nu^{-1/2}1_{GL_2(\Q_p)}\rtimes 1$ (actually, an identity operator).The operator   $\cal N(\Lambda_{s, p}, \wit{s})$ acting on $\nu^{-1/2}1_{GL_2(\Q_p)}\rtimes 1$ (i.e.,$\cal N(\Lambda_{s, p}, \wit{sc_2})$ acting on $\nu^{-1/2}1_{GL_2(\Q_p)}\rtimes 1$ because of the previous remark) has a pole on the quotient $\nu^{-1/2}St_{GL_2(\Q_p)}\rtimes 1,$ but is holomorphic on  $\nu^{-1/2}1_{GL_2(\Q_p)}\rtimes 1.$ The operator  $\cal N(\Lambda_{s, p}, \wit{c_2})$ acting on $\nu^0\times \nu^{-1}\rtimes 1$ has  a pole on $\nu^{0}\rtimes SL_2(\Q_p)=T_1\otimes T_2,$ where $T_1$ is the other tempered representation. We conclude that if we pick $f_p$ from  $L(\nu_p^1;\nu_p^0\rtimes 1)$ then all the operators will be holomorphic $f_p$ (belongs to the spherical subquotient, generated by the spherical vector), but if we pick $f_p$ which belongs to the quotient $T_2,$ the operator  $\cal N(\Lambda_{s, p}, \wit{c_2sc_2})$ has a pole on that $f_p.$ 
Assume now that $\chi_p^2=1,$ but $\chi_p\neq 1.$ Then, we know by Theorem \ref{thm:image_Siegel} that the length of the representation  $\chi_p\nu^{-1/2}1_{GL_2(\Q_p)}\rtimes 1$ is three, and that it has two subrepresentations. Now, the operator $\cal N(\Lambda_{s, p}, \wit{c_2})$ acts  on $\chi_p\nu_p^{-1}\rtimes T_1$ as the identity, and on $\chi_p\nu_p^{-1}\rtimes T_2$ as minus identity. The operator  $\cal N(\Lambda_{s, p}, \wit{s})$ acting on $\chi_p\nu_p^{-1}\times \chi_p\nu_p^{0}\rtimes 1$ has a pole on $\chi_p\nu^{-1/2}St_{GL_2(\Q_p)}\rtimes 1,$ thus it does not have a pole on   $\chi_p\nu^{-1/2}1_{GL_2(\Q_p)}\rtimes 1.$
The operator  $\cal N(\Lambda_{s, p}, \wit{c_2})$ acts on  $\chi_p\nu_p^{0}\times \chi_p\nu_p^{-1}\rtimes 1$ as a holomorphic isomorphism. We conclude that all the operators are holomorphic on  $\chi_p\nu^{-1/2}1_{GL_2(\Q_p)}\rtimes 1.$
Now, assume that $p=\infty.$  Then, again  $\cal N(\Lambda_{s, \infty}, \wit{c_2})$ is holomorphic isomorphism, and  $\cal N(\Lambda_{s, \infty}, \wit{sc_2})$ does not have a pole on $\chi_{\infty}\nu^{-1/2}1_{GL_2(\R)}\rtimes 1.$  The operator  $\cal N(\Lambda_{s, \infty}, \wit{c_2})$ acting on $\chi_{\infty}\nu_{\infty}^0\times \chi_{\infty}^{-1}\nu_{\infty}^{-1}\rtimes 1$ can only have a pole
if $\chi_{\infty}^{-1}\nu_{\infty}^{-1}\rtimes 1$ is reducible, and that is if $\chi_{\infty}=1.$ In that case, we have
\[V_1=L(\nu_{\infty}^1;1) \hookrightarrow \nu_{\infty}\rtimes 1\to X(1,+)\oplus X(1,-),\]
where $X(1,\pm)$ are discrete series of $SL_2(\R)$ (cf. \cite{MuicSp4} Theorem 2.4).  The operator $\cal N(\Lambda_{s, p}, \wit{c_2})$
has a pole on the quotient $\nu_{\infty}^0\rtimes ( X(1,+)\oplus X(1,-)).$ This quotient decomposes as a sum of four temepred representations (cf. \cite{MuicSp4} Theorem 10.7), two of which are quotients of  $\nu_{\infty}^{-1/2}1_{GL_2(\R)}\rtimes 1.$ So, we conclude that the operator $\cal N(\Lambda_{s, \infty}, \wit{c_2sc_2})$ is holomorphic on the spherical subrepresentation of  $\nu_{\infty}^{-1/2}1_{GL_2(\R)}\rtimes 1$ (this also follows from the general properties of the normalized intertwining operators), but other choices of $f_p$ will gives us a pole.

Now assume $s=-\frac{3}{2}.$ Assume $p<\infty$ and $\chi_p=1$ (this is the only case when poles might occur). Then, $\cal N(\Lambda_{s, p}, \wit{c_2})$ has a pole on the quotient $\nu_p^{-2}\rtimes St_{SL_2(\Q_p)},$ which decomposes (in the appropriate Grothendick group) as $St_{Sp_4(\Q_p)}+L(\nu_p^2;St_{SL_2(\Q_p)}).$ On the other hand, we have
\[L(\nu_p^2,\nu_p^1;1)\hookrightarrow \nu_p^{-3/2}1_{GL_2(\Q_p)}\rtimes 1\to L(\nu_p^2;St_{SL_2(\Q_p)}).\]
Note that $ \cal N(\Lambda_{s, p}, \wit{s})$ acting on $\nu_p^{-2}\times \nu_p^1\rtimes 1$ is a holomorphic isomorphism, and
so is  $ \cal N(\Lambda_{s, p}, \wit{c_2})$ acting on $\nu_p^{1}\times \nu_p^{-2}\rtimes 1.$ We conclude that the images of  $ \cal N(\Lambda_{s, p}, \wit{c_2}),\;  \cal N(\Lambda_{s, p}, \wit{sc_2})$ and $ \cal N(\Lambda_{s, p}, \wit{c_2sc_2})$ will be mutually isomorphic on  $\nu_p^{-3/2}1_{GL_2(\Q_p)}\rtimes 1.$ We see that on the spherical subrepresentation (in this case just the trivial representation) all these operators are holomorphic, and the image generated in that way is isomorphic to the representation $\nu_p^{3/2}1_{GL_2(\Q_p)}\rtimes 1$ (thus reducible). If we pick $f_p$ which does not belong to the spherical 
$L(\nu_p^2,\nu_p^1;1)$, all these operators have  a pole, and, after removing the pole by normalization, the image spans an irreducible subrepresentation $L(\nu_p^2;St_{SL_2(\Q_p)}).$ If $\chi_p\neq 1$ all the local intertwining operators are isomorphisms. Now assume $p=\infty.$ Then, by Lemma \ref{lem:operatorsSiegel}, we see that the poles may occur if $\chi_{\infty}=1$ or $\chi_{\infty}=sgn.$ We first deal with the case $\chi_{\infty}=1.$ Then, already the operator $\cal N(\Lambda_{s, \infty}, \wit{c_2})$ has a pole on each the subquotients of  $\nu_{\infty}^{-3/2}1_{GL_2(\R)}\rtimes 1$, except on it's unique subrepresentation (which is again the trivial representation, as in the non-archimedean case), cf. \cite{MuicSp4} Theorem 11.1. Note that the action of the other intertwining operators cannot cancel this pole.  
Assume now that $\chi_{\infty}=sgn.$ Then,  $\cal N(\Lambda_{s, \infty}, \wit{c_2})$ acting on $sgn\nu_{\infty}^{-3/2}1_{GL_2(\R)}\rtimes 1$ is a holomorphic isomorphism. The pole of $\cal N(\Lambda_{s, \infty}, \wit{s})$ acting on $sgn\nu_{\infty}^{-2}\times sgn\nu_{\infty}^1\rtimes 1$ happens on the subquotients of $\delta(sgn\nu_{infty}^{1/2},3)\rtimes 1$ and $sgn\nu_{\infty}^{-3/2}1_{GL_2(\R)}\rtimes 1$ and this representation do not have common irreducible subquotients by \cite{MuicSp4}, proof of Theorem 10.1 and (10.64) there.  The pole of $\cal N(\Lambda_{s, \infty}, \wit{c_2})$ acting on $sgn\nu_{\infty}^{1}\times sgn\nu_{\infty}^{-2}\rtimes 1$ happens on the subquotients of $sgn\nu_{\infty}^1\rtimes X(2,+)\oplus sgn\nu_{\infty}^1\rtimes X(2,-).$ But the representation
n $sgn\nu_{\infty}^{-3/2}1_{GL_2(\R)}\rtimes 1$ is of length three and two of its subquotients (so the ones different from $L(sgn\nu_{\infty}^{2}, sgn\nu_{\infty}^1;1)$) appear in this space where the poles occur. Note that the representation $sgn\nu_{\infty}^{2}\times sgn\nu_{\infty}^1\rtimes 1$ is multiplicity free.

From Lemma \ref{lem:operatorsSiegel} we now that in the case $s<-\frac{3}{2}$ it is enough to see what is happening on the archimedean places (all the local intertwining operators at the non-archimedean places are holomorphic isomorphisms) the following situations:
\begin{enumerate}
\item  $\chi_{\infty}=1,$ $s+\frac{1}{2}$ is odd integer,
\item  $\chi_{\infty}=1,$ $s+\frac{1}{2}$ is even integer,
\item $\chi_{\infty}=sgn,$ $s+\frac{1}{2}$ is an odd integer,
\item  $\chi_{\infty}=sgn,$ $s+\frac{1}{2}$ is an even integer.

\end{enumerate}
Assume the first possibility. Then from Theorem 11.1 of \cite{MuicSp4}, we see that  $\cal N(\Lambda_{s, \infty}, \wit{c_2})$ acting on 
$\nu_{\infty}^{s}1_{GL_2(\R)}\rtimes 1$ has a pole of the first order for every choice $f_p$ which does not belong to the Langlands quotient $L(\nu_{\infty}^{-s+1/2},\nu_{\infty}^{-s-1/2};1)$ (the representation $sgn\nu_{\infty}^{s}1_{GL_2(\R)}\rtimes 1$ is of the length four). The second intertwining operator has a pole on $\nu_{\infty}^{s-1/2}\times \nu_{\infty}^{-(s+1/2)}\rtimes 1,$ but it does not add a pole of the higher order on the image of $\cal N(\Lambda_{s, \infty}, \wit{c_2})(\nu_{\infty}^{s}1_{GL_2(\R)}\rtimes 1)$ (and is non-zero on the subquotients of $\nu_{\infty}^{s}1_{GL_2(\R)}\rtimes 1$).  The intertwining operator  $\cal N(\Lambda_{s, \infty}, \wit{c_2})$ acting on  $\nu_{\infty}^{-(s+1/2)}\times \nu_{\infty}^{s-1/2}\rtimes 1$ is a holomorphic isomorphism by Lemma \ref{lem:operatorsSiegel}.
In the second possibility, the first intertwining operator  $\cal N(\Lambda_{s, \infty}, \wit{c_2})$  acting on $\nu_{\infty}^{s}1_{GL_2(\R)}\rtimes 1$  is a holomorphic isomorphism. The second intertwining operator has a pole on $\delta(-1/2,2s)\rtimes 1$ (again notation from \cite{MuicSp4},  the proof of Theorem 10.1) which is a representation of the length three and does not have a common subquotient with the representation  $\nu_{\infty}^{s}1_{GL_2(\R)}\rtimes 1$  (cf. Theorem 10.6 of \cite{MuicSp4}), so it's holomorphic and non-zero on it.
The third intertwining operator   $\cal N(\Lambda_{s, \infty}, \wit{c_2})$ acts on $\nu_{\infty}^{-(s+1/2)}\times \nu_{\infty}^{s-1/2}\times 1,$ and pole is obtained on the subquotients of  $\nu_{\infty}^{-(s+1/2)}\rtimes (X(-(s-\frac{1}{2}),+)\oplus X(-(s-\frac{1}{2}),-).$ All the subquotients of  $\nu_{\infty}^{s}1_{GL_2(\R)}\rtimes 1,$ except $L(\nu_{\infty}^{-(s-\frac{1}{2})}, \nu_{\infty}^{-(s+\frac{1}{2})};1)$ are among those on which the pole occurs. 

The situation with the rest of the cases is totally symmetric (i.e., one uses Theorem 10.1, Theorem 10.6 or Theorem 11.1 of\cite{MuicSp4}), and we conclude that in all of these four cases, at least one of the operators $\cal N(\Lambda_{s, \infty}, \wit{c_2}),\; \cal N(\Lambda_{s, \infty}, \wit{sc_2})$ and $ \cal N(\Lambda_{s, \infty}, \wit{c_2sc_2})$ has a pole of the first order on each of the subquotients of $sgn^{\varepsilon}\nu_{\infty}^{s}1_{GL_2(\R)}\rtimes 1,$ except $L(sgn^{\varepsilon}\nu_{\infty}^{-(s-\frac{1}{2})}, sgn^{\varepsilon}\nu_{\infty}^{-(s+\frac{1}{2})};1)$ (here $\varepsilon\in \{0,1\}$). If one removes the pole, the image spans the unique maximal proper subrepresentation  of    $\nu_{\infty}^{-s}1_{GL_2(\R)}\rtimes 1.$ 
\end{proof}

\bibliographystyle{siam}
\bibliography{degenerate_eisenstein}
\bigskip
\end{document}